\newcommand\figcaption{\def\@captype{figure}\caption}
\newcommand\tabcaption{\def\@captype{table}\caption}
\def\bq{\begin{equation}}
	\def\eq{\end{equation}}
\def\br{\begin{eqnarray}}
	\def\er{\end{eqnarray}}
\def\brr{\bq\begin{array}{rlll}}
	\def\err{\end{array}\eq}
\def\pmb#1{\mbox{\boldmath $#1$}}\def\text#1{\hbox{#1}}
\newtheorem{theorem}{Theorem}[section]
\newtheorem{lemma}{Lemma}[section]
\newtheorem{rem}{Remark}[section]
\newcommand{\bsub}{\begin{subequations}}
	\newcommand{\esub}{\end{subequations}$\!$}
\newtheorem{example}{\textbf{Example}}[section]
\numberwithin{equation}{section}
\begin{document}
	
\title[]{Enhanced gradient recovery-based a posteriori error estimator and adaptive finite element method for elliptic equations}

\author[Y. Liu, J. Xiao, N. Yi  and H. Cao]{Ying Liu$^\dagger$, Jingjing Xiao$^{\ddagger, *}$, Nianyu Yi$^\S$ and Huihui Cao$^\S$ }

\address{$^\dagger$ Department of Mathematics, School of Science Xi’an University of Technology, Xi’an, 710000, China}
\email{liuyinglixueyuan@xaut.edu.cn}
\address{$^\ddagger$ School of Mathematics, Shandong University, Jinan, 250100, China}
\email{jingjing\_xiao@mail.sdu.edu.cn}
\address{$^\S$ School of Mathematics and Computational Science, Xiangtan University; Hunan Key Laboratory for Computation and Simulation in Science and Engineering; Hunan National Center for Applied Mathematics, Xiangtan, 411105, China}
\email{yinianyu@xtu.edu.cn (N. Yi);\ 201721511145@smail.xtu.edu.cn (H. Cao)}
\keywords{Gradient recovery, a posteriori error estimator; adaptive finite element method; elliptic equation} 

\date{\today}

\begin{abstract}
 Recovery type a posteriori error estimators are popular, particularly in the engineering community, for their computationally inexpensive, easy to implement, and generally asymptotically exactness.
Unlike the residual type error estimators, one can not establish upper and lower a posteriori error bounds for the classical recovery type error estimators without the saturation assumption. 
In this paper, we first present three examples to show the unsatisfactory performance in the practice of standard residual or recovery-type error estimators, then, an improved gradient recovery-based a posteriori error estimator is constructed. 
The proposed error estimator contains two parts, one is the difference between the direct and post-processed gradient approximations, and the other is the residual of the recovered gradient.
The reliability and efficiency of the enhanced estimator are derived.
Based on the improved recovery-based error estimator and the newest-vertex bisection refinement method with a tailored mark strategy, an adaptive finite element algorithm is designed. 
We then prove the convergence of the adaptive method by establishing the contraction of gradient error plus oscillation.
 Numerical experiments are provided to illustrate the asymptotic exactness of the new recovery-based a posteriori error estimator and the high efficiency of the corresponding adaptive algorithm.	
 \end{abstract} 

\maketitle
	
\section{Introduction}\label{sec1}
Adaptive finite element methods \cite{br1978, mn,  Bonito2024AdaptiveFE} are widely used for numerically solving partial differential equations, especially for solutions with singularity or multiscale properties.  
Based on the principle of uniform distribution of errors, the adaptive algorithm adjusts the mesh such that the errors are ``equally'' distributed over the computational mesh.
A posteriori error estimation
\cite{VER1996, Ainsworth2000APE, ibts2001}, which provides information about the size and the distribution of the error, is an essential ingredient of adaptive finite element methods.
Error estimators are computable quantities depending on finite element approximations and known data that locate accurate sources of global and
local error. In global, we use the a posteriori error estimator as the stop criterion to determine whether the finite element solution is an acceptable approximation.
Locally, we use the a posteriori error estimator as an indicator which shows the error distribution and guides the local mesh adaption.

There is a large numerical analysis literature on adaptive finite element methods \cite{marking1996, Cascn2008QuasiOptimalCR, Kreuzer2011DecayRO, CCMM2014}, and various
kinds of a posteriori estimates have been proposed for different problems \cite{Ainsworth2000APE, Durn1991OnTA, Triki2024APE}. 
Basically, there are two
types of a posteriori error estimations, one is residual type error estimation, and the other is the
recovery type error estimation.
Residual type error estimations, originally introduced by Babu{\v{s}}ka and Rheinboldt \cite{br}, to estimate errors, consider local residuals of the numerical solution. 
Today, residual type a posteriori error estimates are well studied for a large class of elliptic model problems \cite{cn,bv,mn1}. 
It is proved that the residual estimator provides upper and lower bounds of the actual error in a suitable norm.
However, in practical simulation, the error estimators are usually larger than the actual unknown error. Hence, it overestimates the true error and causes over-refinement of the mesh.
Recovery type a posteriori error estimations, which adopt a certain norm of the difference between the direct and post-processed approximations of the gradient 
(or other quantities) as an indicator, have gained wide popularity since the work of Zienkiewicz and Zhu \cite{zz}. Estimators of the recovery type possess many attractive features. In particular, their ease of implementation, generality, and ability to produce quite accurate estimators have led to their widespread adoption. Recovery type a posteriori error estimator is based on a suitable finite element post-processing technique including the gradient recovery \cite{dy, fa,yz, za,Deng2022RECOVERYBASEDAP}, flux recovery \cite{cz,cz1} or functional recovery \cite{functional2000}. 
Gradient recovery is a post-processing technique that reconstructs improved gradient approximations from finite element solutions, which is widely used in engineering practice for its superconvergence of recovered derivatives and its robustness as an a posteriori error estimator. 
Now, different kinds of post-processing techniques are developed based on weighted averaging \cite{hy},
 the local least-squares methods \cite{zz,hy1,ppr2004}, the local or global projections, post-processing interpolation, and so on. 
 Under the assumption that the recovery operator is superconvergent \cite{VER1996}, the corresponding error estimator is asymptotically exact \cite{Durn1991OnTA}. 
Unfortunately, without the saturation assumption that the recovery provides a
better approximation than the numerical approximation does, one can not derive the reliability and efficiency of the recovery type estimator.
In some cases, such as the diffusion problem with  discontinuous coefficients of the load function is orthogonal to the finite element space \cite{mn}, the recovery process may not provide improved gradient approximation, then the corresponding error estimator is not reliable and can lead to adaptive refinement
 completely failing to reduce the global error, which in turn produces a wrong finite element approximation. 
The purpose of this paper is to derive a gradient recovery-based error estimator with guaranteed upper and lower bounds of the actual error.
In view of some interesting numerical findings, an improved error estimator, which contains an additional term that ensures reliability, is proposed and proved to be reliable and efficient. We formulate an adaptive algorithm that is driven by this estimator and a tailored mark strategy and establish its convergence theory.

The rest of this paper is organized as follows. In Section \ref{sec2}, we give a description of the linear elliptic model problem and its finite element method. 
The poor performance of the classical error estimators, i.e., residual estimators are not asymptotically accurate, the gradient recovery method may not provide a better approximation, and gradient recovery-based error estimator may lead to over-refinement, are explained and demonstrated by three examples in Section \ref{sec3}. Then an improved gradient recovery-based error estimator is presented and analyzed. In Section \ref{sec4}, we present the adaptive finite element algorithm and prove its convergence result. Several numerical experiments are reported in Section \ref{sec5} to demonstrate the performance of our improved estimator.

\section{Model problem and its finite element scheme}\label{sec2}
Let $\Omega \subset R^d (d=2, 3)$ be an open and bounded polyhedral domain. 
Following the standard notation,  we use $L^2(\Omega)$ to denote the space of all square-integrable functions and its norm is denoted by $\|\cdot\|$. Let $H^s(\Omega)$ be the standard Sobolev space with norm $\|\cdot\|_s$ and seminorm $|\cdot|_s$. Furthermore, $\|\cdot\|_{s,D}$ and $|\cdot |_{s,D}$ denote the norm $\|\cdot\|_s$ and the semi-norm $|\cdot |_s$ restricted to the domain $D\subset\Omega$, respectively. $W^{k,\infty}(\Omega)$ denotes the standard Sobolev space with norm $\|\cdot\|_{k,\infty, \Omega}$ and seminorm $|\cdot|_{k,\infty, \Omega}$. We also use the notation $H^1_0(\Omega)$ for the functions that belong to $H^1(\Omega)$ and their trace vanishes on $\partial \Omega$.  Moreover, we shall use $C$, with or without subscript, for a generic constant
independent of the mesh size and it may take a different value at each occurrence.

We consider the adaptive finite element method for the following problem
\begin{align}
\left\{
\begin{aligned}
-\nabla \cdot (A\nabla u) = f \qquad & in\quad \Omega, \\
u = 0 \qquad &on \quad \partial \Omega,
\end{aligned}
\right.\label{Model problem}
\end{align}
where $f \in L^2(\Omega)$, $A$ is a piecewise constant positive definite symmetric matrix, namely, we assume that there exists a partition of the domain $\Omega$ into a finite set of Lipschitz polygonal domains $\{\Omega_j\}_{j = 1}^N,\, N\ge1$, such that $A$ is constant on each $\Omega_j$.

The corresponding variational form of the model problem \eqref{Model problem} is to find $u\in H_{0}^1(\Omega)$ such that
\begin{equation}\label{yuan}
	a(u,v)=(f,v),\quad\forall v\in H_{0}^1(\Omega),
\end{equation}
where the bilinear and the linear forms are defined by
$$a(u,v)=\int_\Omega(A \nabla u)(\nabla v){\rm d}x,\quad(f,v)=\int_\Omega fv {\rm d}x,$$
respectively. The energy norm is denoted by
$$\interleave v\interleave_{D}=\|A ^{\frac{1}{2}}\nabla v\|_{0,D}.$$

Let $\mathcal{T}_h=\{K\}$ be a conforming triangulation of domain $\Omega$. For each element $K$, $h_K$ is the diameter of $K$, and $h=\max_{K\in \mathcal{T}_h}h_K$ is the mesh size of $\mathcal{T}_h$.
Assume that the triangulation $\mathcal{T}_h$ is regular; i.e., for all $K\in \mathcal{T}_h$, there exists a positive constant $\kappa$ satisfies 
$h_K\leq \kappa\rho_K$, with $\rho_K$ denoting the diameter of the largest circle that may be inscribed in $K$. 
If the number of sub-domain satisfies $N>1$, let $\mathcal{T}_h^j $ be a triangulation of $\Omega_j$ and assume that interfaces $I = \{\partial \Omega_i \cap \partial \Omega_j:\, i,j = 1,\cdots N\}$ do not cut through any element $K\in \mathcal{T}_h$, so that the discontinuities of coefficient occur only across mesh edge.
Define the set of all edges of the triangulation by
\[\mathcal{E}:=\mathcal{E}_\Omega\cup\mathcal{E}_{\partial \Omega},\]
where $\mathcal{E}_\Omega$ is the set of all interior element edges, $\mathcal{E}_{\partial \Omega}$ is the set of boundary edges. For the edge $e\in \mathcal{E}$, let $\textbf{n}_e$ be the unit vector normal to $e$. When $e\in \mathcal{E}_\Omega$,  $e=K_e^+\cap K_e^-$ is the common edge shared by $K_e^+$ and $K_e^-$.
Let $V_h$ be the continuous piecewise linear finite element space,
\[V_h=\{v\in H^1(\Omega):v|_K\in P_1(K),\quad\forall K\in \mathcal{T}_h\}.\]
Set \[V_h^0=\{v\in V_h:v=0 ~\text{on} ~{\partial \Omega} \},\]
then the corresponding finite element discrete scheme of \eqref{yuan} is to find $u_h\in V_h^0 $ such that
\begin{equation}\label{lisan}
	a(u_h,v_h)=(f,v_h),\quad \forall\,v_h\,\in \,V_h^0.
\end{equation}

\section{Improved gradient recovery-based a posteriori error estimation}\label{sec3}
In this section, we first recall the classical a posteriori error estimators and numerically investigate their performance in the adaptive method for model problem \eqref{Model problem}. 
We shall report three examples to demonstrate that the performance of error estimators can be arbitrarily bad if no care is taken to avoid these difficulties, 
and explain why these estimators fail to guide the adaptive refinement accurately.
Based on this observation, we then present our improved gradient recovery-based error estimator and establish its reliability and efficiency.

\subsection{Poor performances of classical error estimators}
We now recall the residual-type a posteriori error estimators for \eqref{yuan} and \eqref{lisan}. 
The standard residual type estimator on element $K$ is defined as 
\begin{equation}\label{res}
\eta_{res,K}^2=h_K^2\|f\|_{0,K}^2+	\sum_{e\in \partial K} h_e\|J_e(A \nabla u_h)\|_{0,e}^2,
\end{equation}
where the jump of the normal component for any $e\in\mathcal{E}_\Omega$ is
$$J_e(A \nabla u_h)=[A \nabla u_h\cdot \textbf{n}_e]=(A \nabla u_h|_{K_e^+}-A \nabla u_h|_{K_e^-})\cdot \textbf{n}_e,$$
and $J_e(A \nabla u_h)=0$ for any $e\in\mathcal{E}\backslash\mathcal{E}_\Omega$. 
The corresponding global residual type estimator is 
\begin{equation}\label{res_global}
\eta_{res}^2=\sum_{K\in \mathcal{T}_h}h_K^2\|f\|_{0,K}^2+	\sum_{e\in \mathcal{E}} h_e\|J_e(A \nabla u_h)\|_{0,e}^2.
\end{equation}
According to \cite{Petzoldt2002}, the estimator is the global upper bound and local lower bound of the exact error, 
\begin{equation}\label{res_estimate}
\begin{aligned}
\interleave u-u_h\interleave^2&\leq C\eta_{res}^2,\\
\eta_{res, K}^2&\leq C\left(\interleave u-u_h\interleave^2_{\omega_K}+osc_h^2(f, \omega_K)\right),
\end{aligned}
\end{equation}
where the oscillation term is 
\[osc_h^2(f, \omega_K)=\sum\limits_{K\in \omega_K}h_K^2\|f-f_K\|_{0,K}^2,\]
with $f_K$ is the cell average of $f$ on element $K$.

When utilizing the above residual estimator as the driver of the adaptive finite element method, the resulting adaptive meshes will yield the optimal convergence rate.
However, the appearance of unknown constant $C$ in the bounds makes the asymptotic result a bit inconvenient and results in the adaptive algorithm not being terminated timely 
in practical application. 
The following benchmark test is presented to show this unsatisfactory performance.

\begin{example}\label{exam3_1}
[Residual estimators are not asymptotically accurate.]
Let L-shaped domain $\Omega=(-1,1)^2\backslash
(0,1)\times (-1,0)$, we consider the Laplace equation with Dirichlet boundary condition given by an exact solution 
\[u=r^{2/3}\sin\left(2\theta/3\right),\qquad r^2=x^2+y^2,\quad \tan \theta=\frac{x}{y}.\] 
The solution shows singularity at the reentrant corner $(0, 0)$.

\begin{figure}[!htbp]
	\begin{minipage}{0.48\linewidth}
		\begin{center}
			\includegraphics[width=7cm]{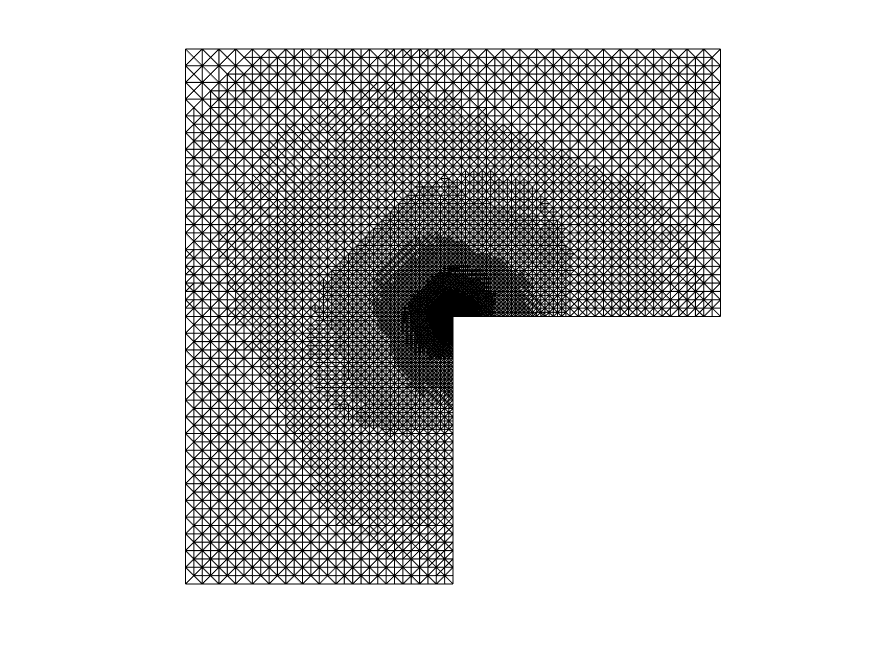}\\
			(a) Adaptive mesh.
		\end{center}
	\end{minipage}
	\begin{minipage}{0.48\linewidth}
		\begin{center}
			\includegraphics[width=7cm]{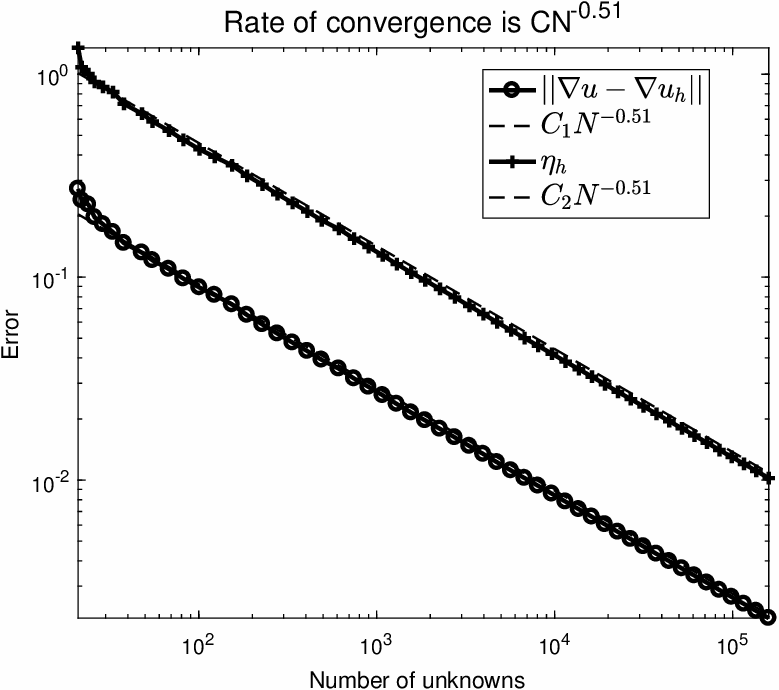}\\
			(b) Error and the residual estimator.
		\end{center}
	\end{minipage}
	\caption{Local refined mesh and the history of the error and estimator.}\label{residual_result}
\end{figure}

\begin{center}
\begin{table}[htbp]
\begin{tabular}{lllllr} \hline
      $k$    &    $N$       & $||\nabla u-\nabla u_h||$ & $\eta$                  \\ \hline
       1     &     21       & $2.7204933666149056e-01$  & $1.3498972825307682e+00$ \\
       $\vdots$   &     $\vdots$      & $\vdots$                       & $\vdots$                      \\
       34    &     6781     & $1.0225214483815645e-02$  & $5.0066913395745878e-02$ \\
\pmb{35}    & \pmb{8073} & $\pmb{9.3703641684631644e-03}$  & $\pmb{4.5878237026149953e-02}$ \\
       36    &     9661     & $8.5594967357026843e-03$  & $4.1932448254810677e-02$ \\
       $\vdots$   &     $\vdots$      & $\vdots$                       & $\vdots$                      \\
       52    &     135971   & $2.2637731869184861e-03$  & $1.1088270194150417e-02$ \\
\pmb{53} &\pmb{159679} & $\pmb{2.0875642635971403e-03}$ & $\pmb{1.0227397197614970e-02}$ \\\hline
\end{tabular}

\caption{The data of mesh nodes, error and error estimator.}\label{Lshpaedata}
\end{table}
\end{center}

We apply the residual error estimator \eqref{res} and the newest-vertex bisection refinement method \cite{Mitchell1989ACO} in the adaptive algorithm, and the error tolerance is $0.01$. 
The results are reported in Figure \ref{residual_result}. 
Firstly, it shows that the error estimator successfully guides the mesh refinement near the reentrant corner $(0, 0)$, and the adaptive algorithm
achieves the optimal convergence rate for the gradient error.
Secondly, the error estimator is approximately five times the exact one. This means that it can not drive the adaptive algorithm to stop timely.
To see this clearly, we list the data in Table \ref{Lshpaedata}.
The residual estimator requires a mesh with $159679$ nodes. 
But we note that the exact error is below the tolerance when the mesh with $8073$ nodes. 
This means that the adaptive iterations after the $35$th iteration are not necessary, which wastes a large computational cost. 
\end{example} 

Example \ref{exam3_1} shows that the adaptive method with the residual type error estimator is inefficient due to the unknown constant $C$ in \eqref{res_estimate}. 
In general, $C$ depends on the mesh quality and the partial differential equation. One way to avoid this difficulty is developing a constant-free estimator, such as the equilibrated residual error estimator \cite{cz2012}.

In practice, a posteriori error estimator based on gradient recovery usually provides efficient and reliable error estimates for adaptive algorithms. 
Note that the gradient of linear finite element solution is piecewise constant, gradient recovery smooths it into the linear finite
element space via weighted averaging, projection, or local least square fitting. Denote the set of nodes by $\mathcal{N}$, for every mesh node $z\in \mathcal{N}$, $\phi_z$ is the Lagrange finite element basis function of $V_h$, $\omega_z=supp(\phi_z)$. Let the recovery space be $W_h=V_h\times V_h$ and the gradient recovery operator as $G: V_h\rightarrow W_h$. The recovered gradient  $G(\nabla u_h)$ \cite{hy} based on weighted averaging is defined as 
\[G(\nabla u_h)=\sum_{z\in\mathcal{N}}G(\nabla u_h)(z)\phi_z,\qquad G(\nabla u_h)(z)=\sum_{j=1}^{J_z}
\alpha_z^j(\nabla u_h)_{K_z^j},\quad \forall\,u_h\in V_h^g,\]
where $\cup_{j=1}^{J_z}\bar{K}_z^j=\bar{\omega}_z$, $\sum_{j=1}^{J_z}\alpha_z^j=1, 0\leq\alpha_z^j
\leq 1, j=1,...,J_z$. 
Ideally, gradient recovery $G(\nabla u_h)$ provides a better approximation of the true gradient $\nabla u$ than $\nabla u_h$ does, which means that there exists $\beta<1$ such that
\begin{equation}\label{SAbeta}
    \|\nabla u-G(\nabla u_h)\|\leq \beta\|\nabla u-\nabla u_h\|.
\end{equation}
Using the triangle inequality, we immediately obtain 
\begin{equation*}
	\frac{1}{1+\beta}\|G(\nabla u_h)- \nabla u_h\|\leq\|\nabla u - \nabla u_h\|
	\leq\frac{1}{1-\beta}\|G(\nabla u_h)- \nabla u_h\|.
\end{equation*}
Then the difference between $\nabla u_h$ and recovered gradient $G(\nabla u_h)$ provides a reasonable local and global error estimator $\|G(\nabla u_h)- \nabla u_h\|_{0, K}$ and $\|G(\nabla u_h)- \nabla u_h\|$, respectively. 
If the gradient recovery method is superconvergent, we have $\beta\rightarrow 0$ and 
\begin{equation*}
	\frac{\|G(\nabla u_h)-\nabla u_h\|}{\|\nabla u-\nabla u_h\|}\rightarrow 1,
\end{equation*}
which indicates that the gradient recovery-based error
estimator is asymptotic exact.

Unfortunately, we can not prove \eqref{SAbeta} with $\beta>1$. Similar to \cite{mn}, we present a concrete example to show that the gradient recovery method is invalid and the corresponding error estimator provides no information on the exact error.

\begin{example}\label{exam3_2} [Gradient recovery method can not provide a better approximation.]
Let $\Omega=(0,1)\times(0,1)$, we consider the Poisson equation with homogeneous Dirichlet boundary condition
 and   
\[f(x,y)=\begin{cases}
	~1 &(x,y)\in (\frac{j}{4},\frac{j+1}{4})\times(\frac{k}{4},\frac{k+1}{4}),~~\,j,k=0,1,2,3~
	\text{and} ~ j+k ~\text{is odd}, \\
	-1 & \text{else}.
\end{cases}\] 

\begin{figure}[ht]
	\centering
	\begin{minipage}{0.48\linewidth}
		\begin{center}
			\includegraphics[height=5.5cm,width=6cm]{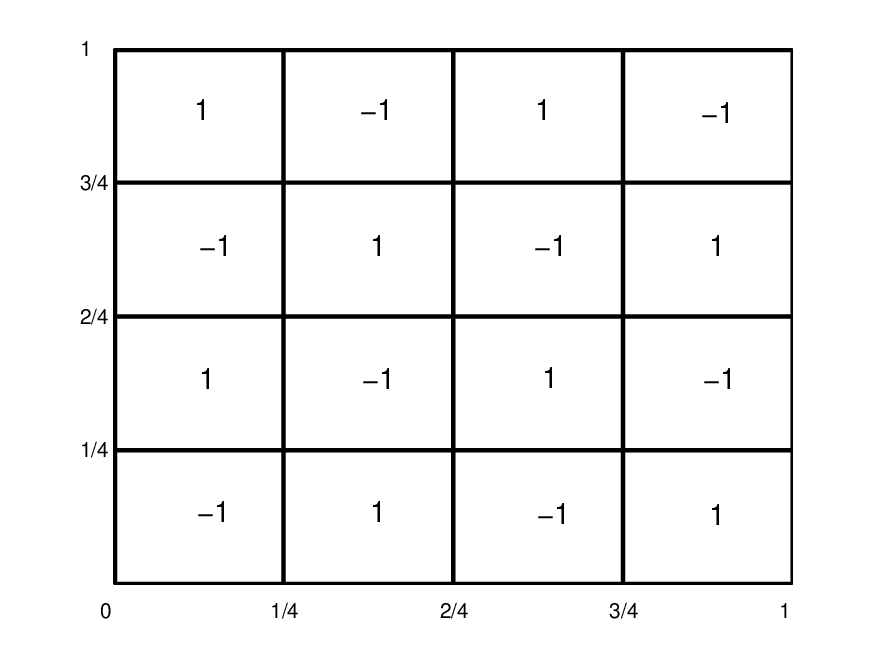} 
		\end{center}
	\end{minipage}
	\begin{minipage}{0.48\linewidth}
		\begin{center}
			\includegraphics[width=7cm]{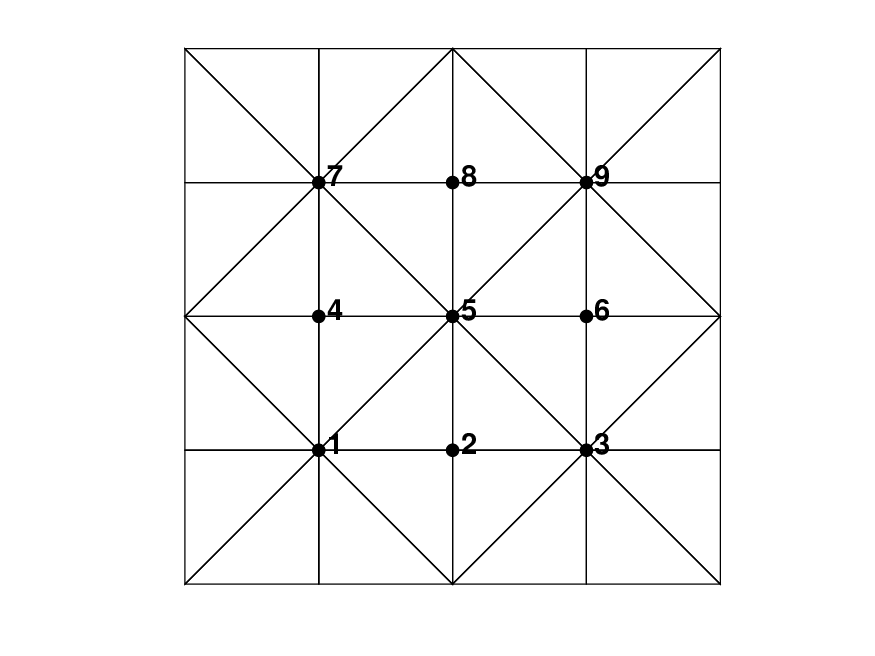} 
		\end{center}
	\end{minipage}
	\caption{Distribution of $f$ and mesh}\label{exam31}
\end{figure}

Figure \ref{exam31} shows the distribution of $f$ and a mesh partition of $\Omega$. If we solve the Poisson equation by the linear finite element method with this mesh, we can easily verify that $f$ is orthogonal to the finite element space, and then the resulting algebraic equation is $AU=0$. Hence, the finite element solution is $u_h=0$, and 
\[\nabla u_h=G(\nabla u_h)=\begin{pmatrix}0\\0\end{pmatrix}.\]
If we take $\|G(\nabla u_h)-\nabla u_h\|$ as the error estimator, it will tell us that the linear finite element solution $u_h=0$ is the exact solution. 
Clearly, $u=0$ is definitely not the solution of the Poisson equation with a piecewise constant $f$. 

In practice adaptive computing, if we take the initial mesh shown in Figure \ref{yixiang} (a),  and set the error tolerance to be $10^{-3}$, the numerical results are displayed in Figure \ref{yixiang}. 
The gradient recovery-based error estimator $\eta=\|G(\nabla u_h)-\nabla u_h\|$ drives the adaptive algorithm to stop at a mesh as shown in Figure \ref{yixiang} (b), and the corresponding numerical approximation is $u_h=0$.

\begin{figure}[!htbp]
	\begin{minipage}{0.48\linewidth}
		\begin{center}
			\includegraphics[width=7cm]{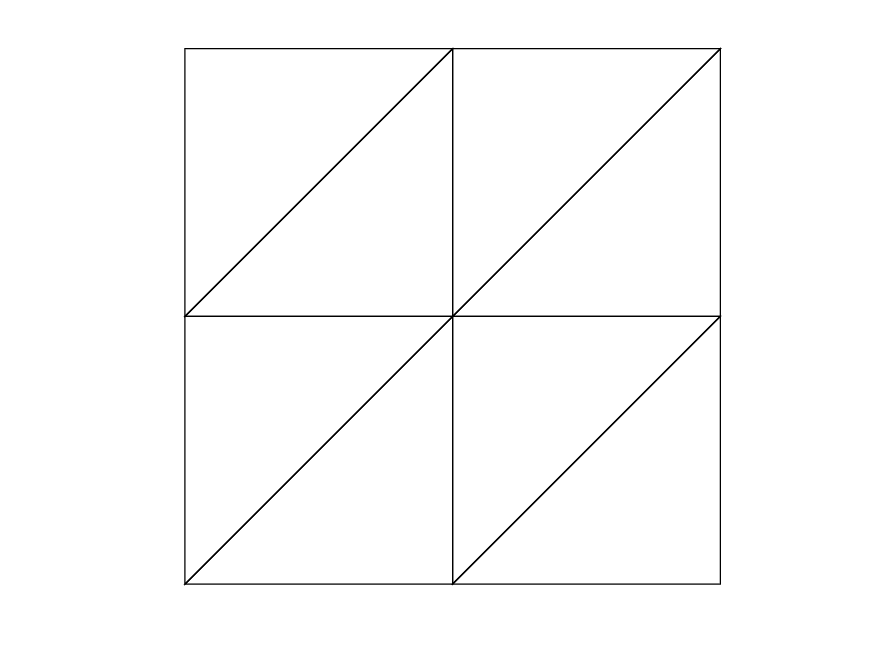}\\
			(a) Initial mesh.
		\end{center}
	\end{minipage}
	\begin{minipage}{0.48\linewidth}
		\begin{center}
			\includegraphics[width=7cm]{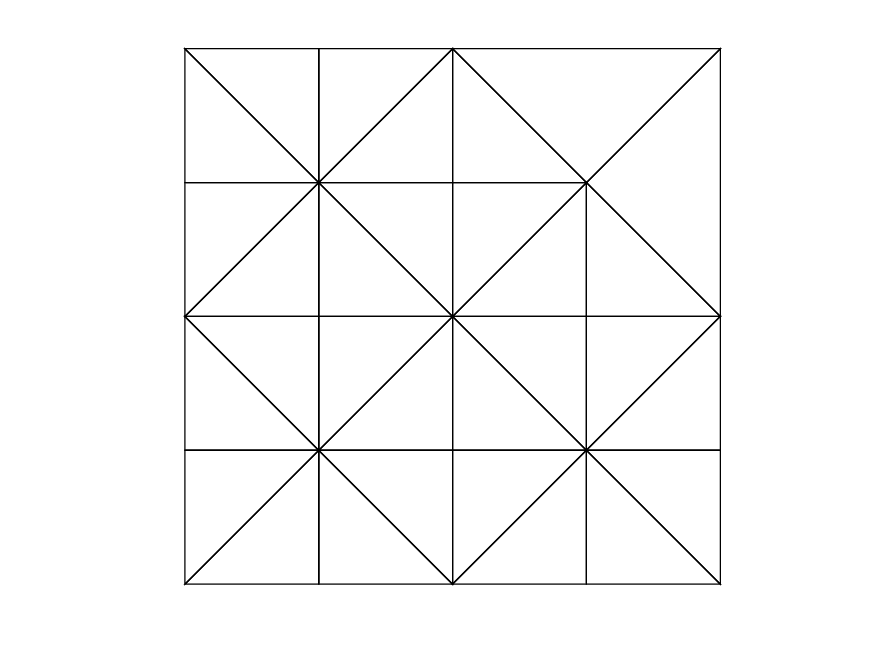}\\
			(b) Final adaptive mesh.
		\end{center}
	\end{minipage}
	
	\begin{minipage}{0.48\linewidth}
		\begin{center}			
			\includegraphics[width=6cm]{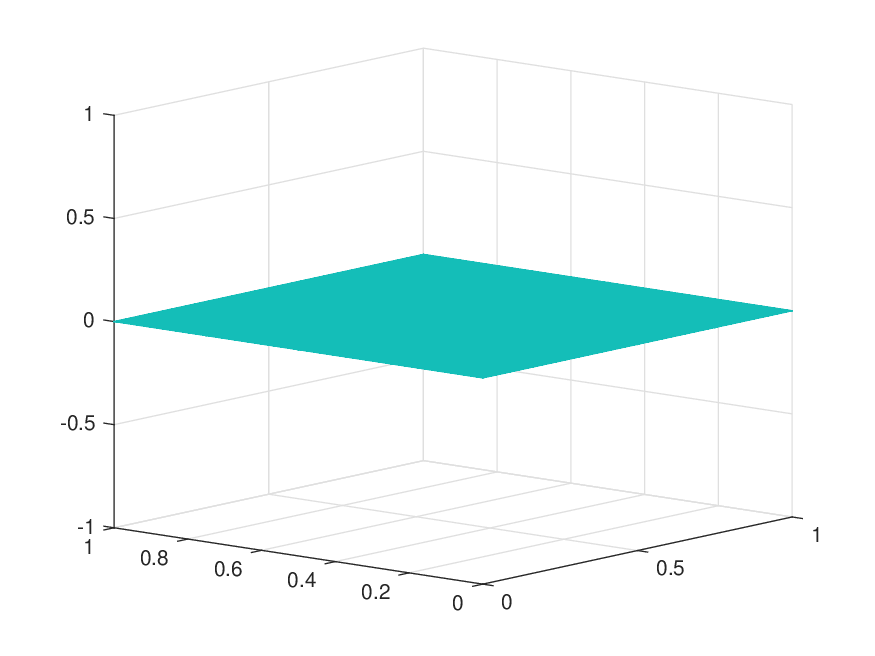}\\
			(c) Numerical solution on the final mesh.
		\end{center}
	\end{minipage}
	\begin{minipage}{0.48\linewidth}
		\begin{center}	
			\includegraphics[width=6cm]{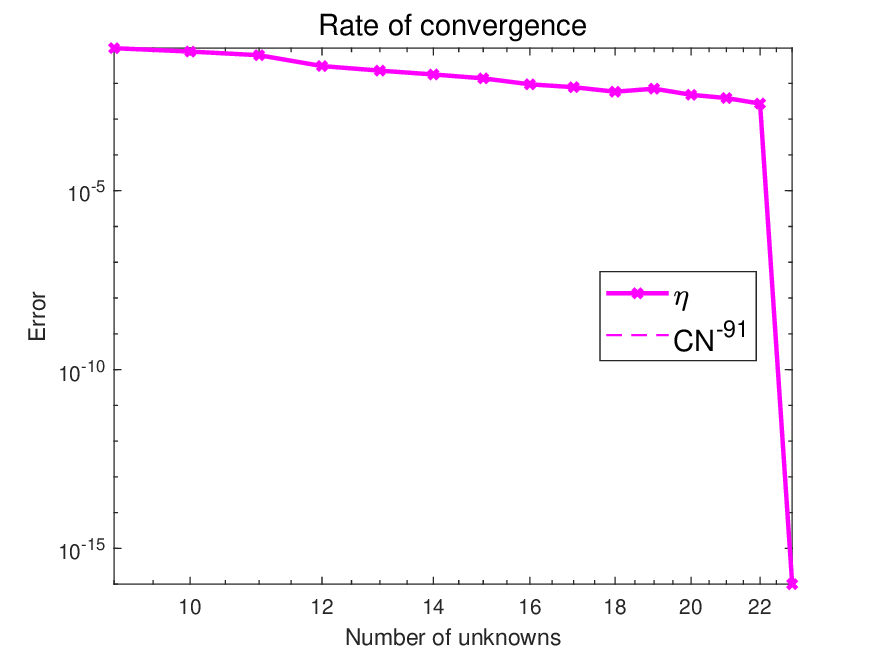}\\
			(d) History of the estimator $\|G(\nabla u_h)-\nabla u_h\|$.
		\end{center}
	\end{minipage}
	\caption{Numerical results of adaptive algorithm for Example \ref{exam3_2}.}\label{yixiang}
\end{figure}

\end{example}

In \cite{Ovall2006}, Ovall demonstrated that a gradient recovery-based estimator can perform arbitrarily poorly for diffusion problems with discontinuous coefficients. 
In the following example, we numerically investigate the performance of the recovery-based estimator to elliptic interface problems.

\begin{example}\label{exam3_3} [Gradient recovery-based error estimator leads to over-refinement.]
Consider the elliptic interface problem 
\[
\left\{
\begin{aligned}
-\nabla \cdot (A\nabla u) = 0 \qquad & in\quad \Omega, \\
u = g \qquad &on \quad \partial \Omega,
\end{aligned}
\right.\]
where $\Omega=[-1,1]\times[-1,1]=\Omega_1\cup\Omega_2\cup\Omega_3\cup\Omega_4$ is as shown in Figure \ref{00} (a). 
The coefficient $A$ and the exact solution $u$ are chosen as
\[A=\begin{cases}
	~1,&\quad (x,y)\in \Omega_1, \\
	10,&\quad (x,y)\in \Omega_2, \\
	100,&\quad (x,y)\in \Omega_3, \\
	1000,&\quad  (x,y)\in \Omega_4,
\end{cases}\qquad
u(x,y)=\begin{cases}
	~x+y+\frac{9}{10},&\quad (x,y)\in \Omega_1, \\
	\frac{1}{10}x+\frac{1}{10}y,&\quad (x,y)\in \Omega_2, \\
	\frac{1}{100}x+\frac{1}{100}y,&\quad (x,y)\in \Omega_3, \\
	\frac{1}{1000}x+\frac{1}{1000}y+\frac{9}{1000},&\quad  (x,y)\in \Omega_4.
\end{cases}\]

We apply the adaptive finite element method with a gradient recovery-based error estimator to solve this interface problem. Set the error tolerance to be $10^{-2}$ and the initial mesh is shown in Figure \ref{00} (b). It is easy to check that the linear finite element solution $u_h=u$, and no error appears. But, as shown in Figure \ref{00} (c)-(d), the error estimator overestimates the local errors and leads to over-refinement.
To improve the reliability and efficiency of gradient recovery error estimators for problems with jumping diffusion coefficients, it is necessary to
avoid averaging across the interfaces between regions with different coefficients. One possible way is to apply the gradient recover method
on each sub-domain $\Omega_j$ separately, then the recovered gradient is not globally continuous.

\begin{figure}[h]
	\begin{minipage}{0.48\linewidth}
		\begin{center}
			\includegraphics[height=5.5cm,width=6cm]{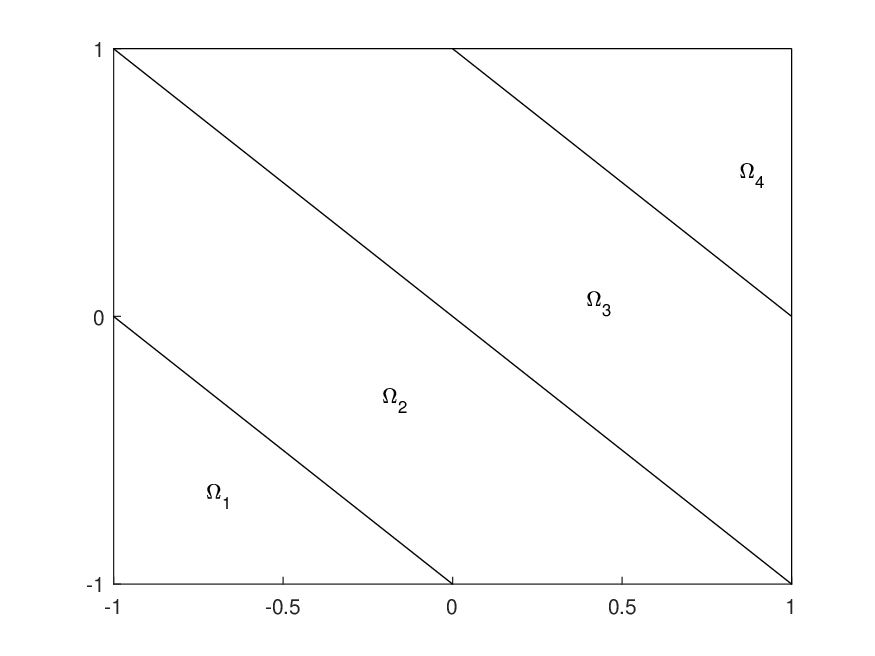}\\
			(a) Domain $\Omega$.
		\end{center}
	\end{minipage}
	\begin{minipage}{0.48\linewidth}
		\begin{center}
			\includegraphics[width=7cm]{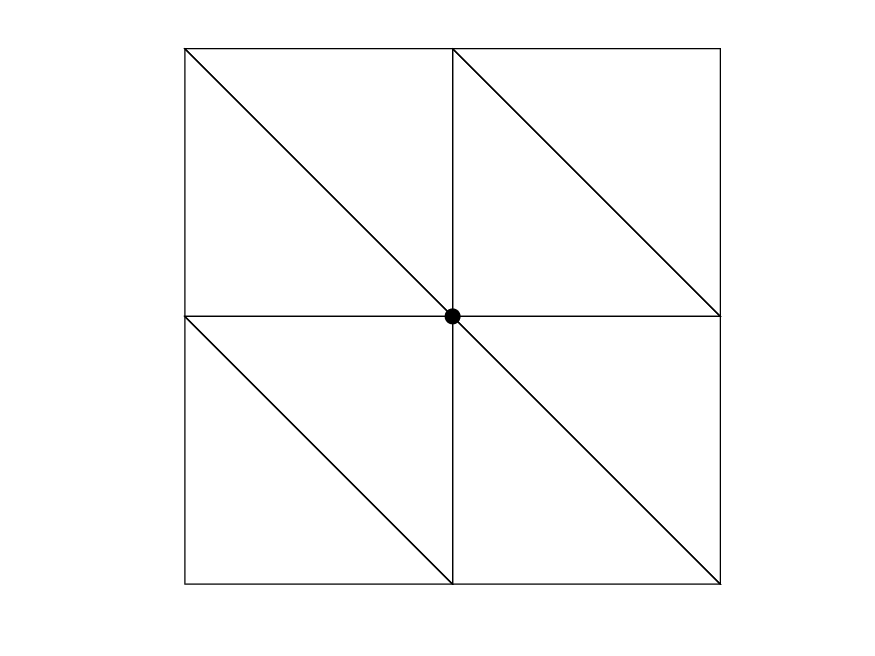}\\
			(b) Initial mesh.
		\end{center}
	\end{minipage}
	
	\begin{minipage}{0.48\linewidth}
		\begin{center}
			\includegraphics[width=7cm]{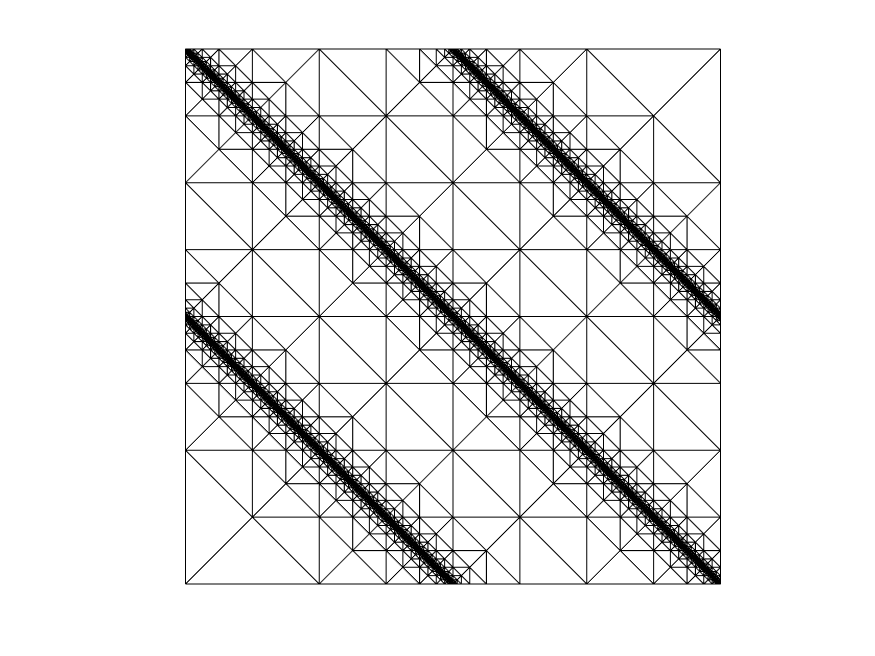}\\
			(c) Adaptive mesh.
		\end{center}
	\end{minipage}
	\begin{minipage}{0.48\linewidth}
		\begin{center}
			\includegraphics[width=7cm]{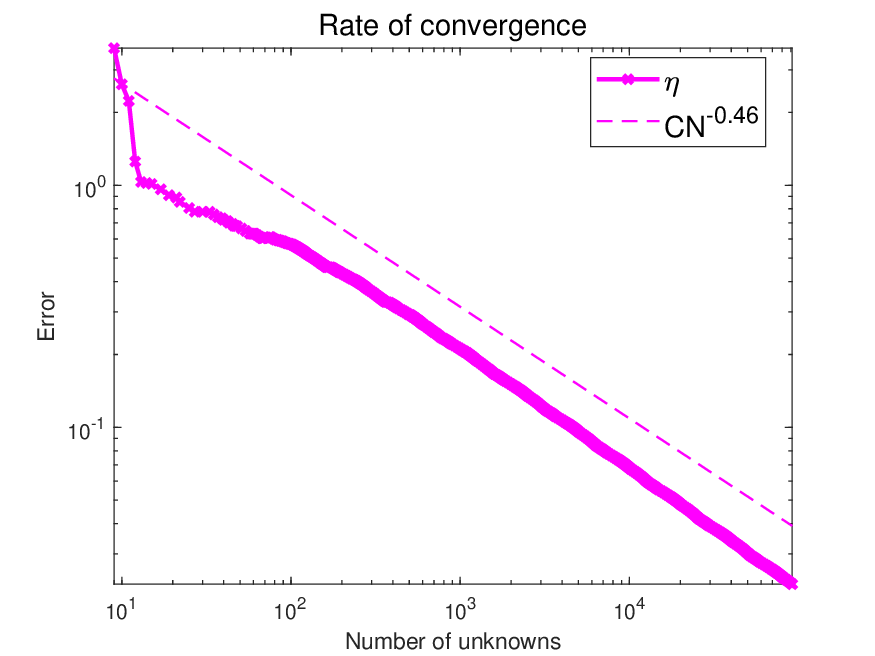}\\
			(d) Error estimator.
		\end{center}
	\end{minipage}
	\caption{Numerical results of Example \ref{exam3_3}.}\label{00}
\end{figure}

\end{example}

In the three examples above, we have shown that the classical error estimators can be arbitrarily poor and that the local error estimators can lead to over-refinement or produce a wrong finite element approximation. In the following sections, we provide an improved gradient recovery-based error estimator, which can avoid the aforementioned difficulties encountered by the classical error estimators. 

\subsection{Improved error estimator based on gradient recovery}
For ease of presentation, we restrict ourselves to the homogeneous Dirichlet boundary value problem for the Poisson equation
\begin{equation}\label{mp1} 
\left\{
\begin{aligned}
-\Delta u = f \qquad & in\quad \Omega, \\
u = 0 \qquad &on \quad \partial \Omega.
\end{aligned}
\right.   
\end{equation}
Its finite element scheme reads: find $u_h\in V_h^0$ such that
\begin{equation}\label{mpfem}
    (\nabla u_h, \nabla v_h)=(f, v_h),\qquad \forall v_h\in V_h^0.
\end{equation}
Define the local gradient recovery-based a posteriori error estimator $\eta_K$ on $ K \in \mathcal{T}_h$ as
\begin{equation}\label{estomator}
	\eta_K^2=\|G(\nabla u_h)-\nabla u_h\|^2_{0,K}+h_K^2\|{f}+\nabla\cdot G(\nabla u_h)\|_{0,K}^2.
\end{equation}
Summing the above equation over all element $K$ in $\mathcal{T}_h$, the global error estimator $\eta$ is
\[\eta^2=\sum_{K\in \mathcal{T}_h }\eta_K^2=\|G(\nabla u_h)-\nabla u_h\|^2+\|h({f}+\nabla\cdot G(\nabla u_h))\|^2.\]
We now establish the reliability and efficiency of the improved error estimator, i.e., it is a global upper and local lower bound of the exact error.

Denote by $I_h: L^2(\Omega)\rightarrow V_h^0$ the quasi-interpolation operator of
Cl{\'e}ment \cite{verfurth1999}. We have the following interpolation error estimations for $I_h$.

\begin{lemma}[\cite{verfurth1999}]\label{lem1}
	For any $K\in \mathcal{T}_h$, $v\in H_{0,D}^1(\Omega)$, $\omega_K\equiv\cup_{\bar{K'}\cap\bar{K}\neq\emptyset}{K'}$, there exists a positive constant $C$ such that
	\begin{align}\label{17a}
		\|v-I_hv\|_{0,K}\leq Ch_K\|\nabla v\| _{\omega_K},\\
		\label{17b}
		\|\nabla(v-I_hv) \|_{0,K}\leq C\|\nabla v\|_{\omega _K}.
	\end{align}
\end{lemma}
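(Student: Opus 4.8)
Since the two stated bounds are the classical interpolation estimates for Cl\'ement's quasi-interpolant, the plan is to reconstruct the standard argument. Recall the construction: because $I_h$ must be defined on all of $L^2(\Omega)$ one cannot use nodal values, so for each node $z\in\mathcal{N}$ one takes the local $L^2(\omega_z)$-orthogonal projection $\pi_z$ onto $P_1(\omega_z)$ (or $P_0$) and sets $I_h v=\sum_{z\in\mathcal{N},\,z\notin\partial\Omega}(\pi_z v)(z)\,\phi_z$, the sum running only over interior nodes so that $I_h v\in V_h^0$. Two structural facts carry the proof. First, \emph{local $L^2$-stability}: since $\pi_z$ is an orthogonal projection, $\|\pi_z v\|_{0,\omega_z}\le\|v\|_{0,\omega_z}$, and a reference-element scaling argument (using shape-regularity, so that $h_z\sim h_K$ for $z\in K$ and $\|\phi_z\|_{0,K}\sim h_K^{d/2}$) upgrades this to $\|I_h v\|_{0,K}\le C\|v\|_{0,\omega_K}$. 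Second, a \emph{patch Poincar\'e/Friedrichs inequality}: $\inf_{c\in\R}\|v-c\|_{0,\omega_z}\le Ch_z\|\nabla v\|_{0,\omega_z}$, and, when $\omega_z$ meets $\partial\Omega$, the sharper $\|v\|_{0,\omega_z}\le Ch_z\|\nabla v\|_{0,\omega_z}$ because $v$ has zero trace there; the constants are uniform because each patch is a union of boundedly many shape-regular elements of comparable size.

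For the $L^2$ bound \eqref{17a}, I distinguish two cases. If $K$ touches $\partial\Omega$, local stability alone gives $\|v-I_h v\|_{0,K}\le\|v\|_{0,K}+\|I_h v\|_{0,K}\le C\|v\|_{0,\omega_K}$, which the Friedrichs inequality on $\omega_K$ bounds by $Ch_K\|\nabla v\|_{0,\omega_K}$. If $K$ is interior, all its vertices are interior nodes and $I_h$ reproduces constants on $K$; hence, for any $c\in\R$, $v-I_h v=(v-c)-I_h(v-c)$ on $K$, so local stability gives $\|v-I_h v\|_{0,K}\le C\|v-c\|_{0,\omega_K}$, and choosing $c$ to be the mean of $v$ over $\omega_K$ and invoking the patch Poincar\'e inequality yields $\|v-I_h v\|_{0,K}\le Ch_K\|\nabla v\|_{0,\omega_K}$.

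For the gradient bound \eqref{17b}, I write $\nabla(v-I_h v)=\nabla v-\nabla I_h v$ on $K$ and subtract a constant $c$ (the patch mean of $v$, or $c=0$ when $K$ meets $\partial\Omega$): $\nabla(v-I_h v)=\nabla(v-c)-\nabla I_h(v-c)$. The first term equals $\nabla v$ and is already of the desired form, $\|\nabla(v-c)\|_{0,K}=\|\nabla v\|_{0,K}$. The second is controlled by the inverse inequality on the finite-dimensional space $P_1$ (again via reference-element scaling and shape-regularity), $\|\nabla I_h(v-c)\|_{0,K}\le Ch_K^{-1}\|I_h(v-c)\|_{0,K}\le Ch_K^{-1}\|v-c\|_{0,\omega_K}$, and then the patch Poincar\'e/Friedrichs inequality absorbs the factor $h_K^{-1}$, leaving $C\|\nabla v\|_{0,\omega_K}$. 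Summing over all $K$ and using the bounded overlap of the patches $\{\omega_K\}$ gives the corresponding global estimates if needed.

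The step I expect to be the main obstacle is the careful treatment of the boundary patches and the verification that all constants are mesh-independent. Away from $\partial\Omega$ one has the clean fact that $I_h$ reproduces constants, but on elements touching the boundary this must be replaced by a Friedrichs-type inequality exploiting $v|_{\partial\Omega}=0$, and one must ensure the patch over which $v$ is averaged genuinely meets the portion of $\partial\Omega$ where the trace vanishes. The remaining ingredients --- local $L^2$-stability of $\pi_z$, the reference-element scalings, the inverse estimate on $P_1$, and the finite-overlap property of the patches --- are routine consequences of the regularity assumption $h_K\le\kappa\rho_K$ already imposed on $\mathcal{T}_h$.
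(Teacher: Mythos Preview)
The paper does not prove this lemma at all: it is stated with a citation to Verf\"urth's quasi-interpolation paper \cite{verfurth1999} and then used as a black box in the reliability proof of Theorem~\ref{cthe1}. So there is no in-paper argument to compare your proposal against.

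That said, your sketch is the standard proof one finds in the cited source and in textbooks: define $I_h$ via local $L^2$-projections at interior nodes, use local $L^2$-stability of the projection together with constant-reproduction (or a Friedrichs inequality near $\partial\Omega$) to get \eqref{17a}, and combine an inverse estimate on $P_1$ with the patch Poincar\'e/Friedrichs bound to get \eqref{17b}. The treatment of boundary patches you flag as the delicate point is indeed where the work lies, and your plan to replace constant-subtraction by the trace condition $v|_{\partial\Omega}=0$ is the correct device. All ingredients you invoke (shape-regularity scalings, bounded overlap, uniform Poincar\'e constants on reference patches) are exactly those assumed in the paper's setup, so the argument goes through.
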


\begin{theorem}\label{cthe1}
	Let $u$ and $u_h$ be the solution of \eqref{mp1} and \eqref{mpfem}, respectively. Then there exists a  positive constant $C_r$, such that the estimator $\eta$ defined in \eqref{estomator} satisfies the following global reliability
	\begin{equation}\label{kekao}
		\|\nabla u-\nabla u_h\| ^2\leq C_r\sum_{K\in\mathcal{T}_h}\left(\|G(\nabla u_h) -\nabla u_h\|_{0,K}^2
		+ h_K^2\|f+\nabla\cdot  G(\nabla u_h) \|_{0,K}^2\right).
	\end{equation}
\end{theorem}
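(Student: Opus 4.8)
The plan is to run the classical residual-type reliability argument, but with the recovered gradient $G(\nabla u_h)$ taking the place that the discrete flux $\nabla u_h$ normally plays, and to exploit crucially that $G(\nabla u_h)\in W_h=V_h\times V_h$ is \emph{globally continuous}. Set $e=u-u_h\in H_0^1(\Omega)$. Starting from the coercivity identity and the weak form $(\nabla u,\nabla e)=(f,e)$, I would write
\[
\|\nabla e\|^2=(f,e)-(\nabla u_h,\nabla e)=(f,e)-\big(G(\nabla u_h),\nabla e\big)+\big(G(\nabla u_h)-\nabla u_h,\nabla e\big).
\]
The last term is immediately controlled by $\|G(\nabla u_h)-\nabla u_h\|\,\|\nabla e\|$ via Cauchy--Schwarz, and contributes the first part of $\eta_K$.

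For the pair $(f,e)-(G(\nabla u_h),\nabla e)$ I would integrate by parts. Since each component of $G(\nabla u_h)$ lies in $V_h\subset H^1(\Omega)$, we have $G(\nabla u_h)\in H(\mathrm{div},\Omega)$ with continuous normal trace across interior edges, and $e=0$ on $\partial\Omega$, so all edge terms cancel and
\[
(f,e)-\big(G(\nabla u_h),\nabla e\big)=\big(f+\nabla\cdot G(\nabla u_h),\,e\big)=\sum_{K\in\mathcal{T}_h}\int_K\big(f+\nabla\cdot G(\nabla u_h)\big)\,e\,\mathrm dx .
\]
To gain the mesh-size weight, I would split $e=(e-I_he)+I_he$ with the Cl\'ement interpolant $I_h$ of Lemma \ref{lem1}. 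On the $e-I_he$ part, apply \eqref{17a} elementwise and sum using the finite-overlap property of the patches $\omega_K$, producing a bound $C\big(\sum_K h_K^2\|f+\nabla\cdot G(\nabla u_h)\|_{0,K}^2\big)^{1/2}\|\nabla e\|$, i.e.\ the second part of $\eta_K$. On the $I_he$ part, since $I_he\in V_h^0$ the scheme \eqref{mpfem} gives $(\nabla u_h,\nabla I_he)=(f,I_he)$; integrating $G(\nabla u_h)$ by parts once more (edge terms vanish because $I_he$ vanishes on $\partial\Omega$) gives $\big(f+\nabla\cdot G(\nabla u_h),I_he\big)=\big(\nabla u_h-G(\nabla u_h),\nabla I_he\big)$, which is at most $\|G(\nabla u_h)-\nabla u_h\|\,\|\nabla I_he\|\le C\|G(\nabla u_h)-\nabla u_h\|\,\|\nabla e\|$ after using \eqref{17b} with finite overlap and $\|\nabla I_he\|\le\|\nabla(I_he-e)\|+\|\nabla e\|$. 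Combining the three estimates yields $\|\nabla e\|^2\le C\,\eta\,\|\nabla e\|$ with $\eta^2=\sum_K\eta_K^2$; dividing by $\|\nabla e\|$ and squaring gives \eqref{kekao}.

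The only genuinely delicate point — and the one that explains why the extra term $h_K^2\|f+\nabla\cdot G(\nabla u_h)\|_{0,K}^2$ is precisely what restores reliability — is the vanishing of the inter-element jump contributions in the integration by parts: this is available exactly because the recovered gradient lives in the \emph{continuous} space $W_h$, so no edge-jump term survives and the sole volumetric residual is $f+\nabla\cdot G(\nabla u_h)$. (Were the recovery performed subdomainwise, as suggested for the interface problem of Example \ref{exam3_3}, the recovered gradient would be discontinuous across interfaces and one would have to reinstate a jump term of the form $h_e\|[G(\nabla u_h)\cdot\mathbf n_e]\|_{0,e}^2$ in the estimator.) Everything else — the Cl\'ement estimates of Lemma \ref{lem1}, Galerkin orthogonality, and the finite-overlap counting of the patches $\omega_K$ — is standard bookkeeping.
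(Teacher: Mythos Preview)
Your argument is correct and rests on the same key observation as the paper's proof --- that $G(\nabla u_h)\in W_h\subset H(\mathrm{div};\Omega)$ eliminates all inter-element jump contributions upon integration by parts --- but the organization differs slightly. The paper invokes Galerkin orthogonality at the very first step, writing $\|\nabla e\|^2=(\nabla e,\nabla(e-I_he))$ and only then inserting $G(\nabla u_h)$; this leaves just two terms, the volumetric residual $M=\sum_K(f+\nabla\cdot G(\nabla u_h),e-I_he)_K$ and the recovery discrepancy $N=\sum_K(G(\nabla u_h)-\nabla u_h,\nabla(e-I_he))_K$, each dispatched by a single Cl\'ement estimate. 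You instead test against the full error $e$ and split $e=(e-I_he)+I_he$ afterwards, which obliges you to treat the $I_he$ portion separately and discover --- via the discrete equation and a second integration by parts --- that it collapses into another copy of the recovery discrepancy. The two routes are mathematically equivalent; the paper's ordering is simply a bit more streamlined, as it uses orthogonality once rather than reconstructing its effect later.
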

\begin{proof}
	Let $e=u-u_h$, utilizing the Galerkin orthogonality of the finite element approximation, 
	integration by parts and \eqref{Model problem},  we obtain
	\begin{align*}
		\|\nabla e\| ^2 &=(\nabla e, \nabla e)=(\nabla e, \nabla (e-I_he))=( \nabla(u-u_h),\nabla(e-I_he)) \\
		&=( \nabla u-G(\nabla u_h),\nabla(e-I_he))+(  G(\nabla u_h)-\nabla u_h,\nabla(e-I_he))\\
		&=\sum_{K\in\mathcal{T}_h}({f}+\nabla\cdot G(\nabla u_h),e-I_he)_K+\sum_{K\in\mathcal{T}_h}(  G(\nabla u_h)- \nabla u_h,\nabla(e-I_he))_K\\
		&=:M+N,
	\end{align*}
	where 
	\begin{align*}
		M=&\sum_{K\in\mathcal{T}_h}({f}+\nabla\cdot G(\nabla u_h),e-I_he)_K,\\
		N=&\sum_{K\in\mathcal{T}_h}(  G(\nabla u_h)- \nabla u_h,\nabla(e-I_he))_K.
	\end{align*}
Using the Cauchy-Schwartz inequality and \eqref{17a}, we have
	\begin{equation*}
		\begin{split}
			M&\leq \sum_{K\in\mathcal{T}_h}\|{f}+\nabla\cdot G(\nabla u_h)\|_{0,K}\|e- I_he\|_{0,K}\\
			&\leq  C\left(\sum_{K\in\mathcal{T}_h} h^2_K\|{f}+\nabla\cdot G(\nabla u_h)\|^2_{0,K}\right)^{1/2}
			\|\nabla e\|.
		\end{split}
	\end{equation*}
Using the Cauchy-Schwartz inequality, \eqref{17b} and the Young inequality, we have
	\begin{align*}
		N&\leq\sum_{K\in\mathcal{T}_h}\| G(\nabla u_h)-\nabla u_h\|_{0,K}\|\nabla(e-I_he)\|_{0,K}\\
		&\leq C\left(\sum_{K\in\mathcal{T}_h}\|G(\nabla u_h)-\nabla u_h\|_{0,K}\right)^{1/2}\|\nabla e\|. 
	\end{align*}
Then, we obtain 
	\begin{equation}
		\|\nabla e\|\leq C_r\left(\left(\sum_{K\in\mathcal{T}_h}\|G(\nabla u_h)-\nabla u_h\|_{0,K}\right)^{1/2}+\left(\sum_{K\in\mathcal{T}_h} h^2_K\|{f}+\nabla\cdot G(\nabla u_h)\|^2_{0,K}\right)^{1/2}\right),
	\end{equation}
	which completes the proof.
\end{proof}

Note that all gradient recovery techniques can be viewed as performing some sort of averaging of the piecewise constant gradient $\nabla u_h$. Using the estimation of $\|G(\nabla u_h)-\nabla u_h\|_{0, K}$ obtained by Du and Yan \cite{dy}, we can get a local lower bound of the numerical error in terms of local indicators and oscillation.

\begin{theorem}\label{youxiao}
	Let $u$ and $u_h$ be the solution of \eqref{mp1} and \eqref{mpfem}, $f\in L^2(\Omega)$, respectively. Then there exists a positive constant $C$, which is independent of mesh size $h$, such that the estimator defined in \eqref{estomator} satisfies
	\begin{equation}\label{loe}
		\eta_K^2\leq C\left(\|\nabla   u-\nabla u_h\| _{\omega_K}^2 + \sum_{K\in \omega_K'}{h}_{K'}^2\|f-\bar{f}\|_{0,K'}^2\right),
	\end{equation} 
where $\bar{f}$ denotes a piecewise constant approximation of $f$ over $\mathcal{T}_h$ with
element value $f_K$ equal to the mean value of $f$ over $K\in\mathcal{T}_h$.
\end{theorem}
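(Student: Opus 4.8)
The plan is to estimate the two constituents of $\eta_K^2$ separately, the recovered‑gradient difference $\|G(\nabla u_h)-\nabla u_h\|_{0,K}^2$ and the recovered residual $h_K^2\|f+\nabla\cdot G(\nabla u_h)\|_{0,K}^2$, and then to square, add, and absorb the cross term that appears.

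For the recovered‑gradient difference I would appeal to the local estimate of Du and Yan \cite{dy}. The reason it holds is that a weighted‑averaging operator $G$ reproduces constant vector fields (because $\sum_{j}\alpha_z^j=1$), so on $K$ the value $G(\nabla u_h)-\nabla u_h$ depends only on $\nabla u_h|_{\omega_K}$ and is, up to a constant uniform in $h$ by the usual scaling and shape‑regularity argument, a linear combination of the normal jumps $J_e(\nabla u_h)$ across the interior edges $e\subset\omega_K$, weighted by $h_e^{1/2}$ — indeed, for a vertex $z$ of $K$ one has $\omega_z\subset\omega_K$, and $(\nabla u_h)|_{K'}-(\nabla u_h)|_K$ is a path‑sum of such jumps. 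Hence $\|G(\nabla u_h)-\nabla u_h\|_{0,K}\le C\sum_{e\subset\omega_K}h_e^{1/2}\|J_e(\nabla u_h)\|_{0,e}$, and the classical edge‑bubble efficiency estimate for the residual — which uses the Galerkin orthogonality of $u_h$ and therefore brings in the data oscillation — yields
\[
\|G(\nabla u_h)-\nabla u_h\|_{0,K}^2\le C\Big(\|\nabla u-\nabla u_h\|_{\omega_K}^2+\sum_{K'\subset\omega_K'}h_{K'}^2\|f-\bar f\|_{0,K'}^2\Big),
\]
which is exactly what is recorded in \cite{dy}. This is the step I expect to be the crux: without Galerkin orthogonality the oscillation term is genuinely unavoidable, and the bookkeeping of the slightly enlarged patch $\omega_K'$ is delicate.

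For the recovered residual I would use the interior bubble function. Since $u_h$ is affine on $K$, $\nabla\cdot\nabla u_h=0$ on $K$, so $r_K:=\bar f|_K+\nabla\cdot G(\nabla u_h)|_K$ is constant on $K$ and $f+\nabla\cdot G(\nabla u_h)=r_K+(f-\bar f)$ there. Let $\psi_K$ be the standard interior bubble on $K$, set $b_K=r_K\psi_K\in H_0^1(K)$, so that $\|r_K\|_{0,K}^2\le C\int_K r_K b_K$, $\|b_K\|_{0,K}\le\|r_K\|_{0,K}$ and $\|\nabla b_K\|_{0,K}\le Ch_K^{-1}\|r_K\|_{0,K}$. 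Integrating the divergence by parts (no boundary term because $b_K|_{\partial K}=0$), discarding $\int_K\nabla u_h\cdot\nabla b_K=0$, and replacing $\int_K f b_K$ by $\int_K\nabla u\cdot\nabla b_K$ through the weak form \eqref{yuan} (valid after extending $b_K$ by zero), I obtain
\[
\int_K r_K b_K=\int_K(\nabla u-\nabla u_h)\cdot\nabla b_K+\int_K(\bar f-f)\,b_K-\int_K\big(G(\nabla u_h)-\nabla u_h\big)\cdot\nabla b_K .
\]
The Cauchy–Schwarz inequality together with the three bubble bounds above then gives
\[
h_K\|r_K\|_{0,K}\le C\big(\|\nabla u-\nabla u_h\|_{0,K}+\|G(\nabla u_h)-\nabla u_h\|_{0,K}+h_K\|f-\bar f\|_{0,K}\big),
\]
and by the triangle inequality the same bound holds for $h_K\|f+\nabla\cdot G(\nabla u_h)\|_{0,K}$.

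Finally I would square and add the two estimates, use the first step to absorb the term $\|G(\nabla u_h)-\nabla u_h\|_{0,K}$ produced by the residual bound, and collect all the patches into $\omega_K'$; this yields exactly \eqref{loe}. The interior‑bubble part is entirely routine, so the only substantial ingredient is the first step, namely the local efficiency of the recovered‑minus‑direct gradient, which is why it is isolated and taken from \cite{dy}.
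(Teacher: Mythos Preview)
Your proposal is correct and follows the paper closely on the first term: both of you bound $\|G(\nabla u_h)-\nabla u_h\|_{0,K}$ by edge jumps via the Du--Yan estimate \cite{dy} and then invoke the standard edge-bubble efficiency bound (with its inevitable oscillation contribution) from \cite{mn1}.

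For the second term, however, the route differs. The paper does not run a fresh bubble argument; instead it simply writes
\[
h_K^2\|f+\nabla\cdot G(\nabla u_h)\|_{0,K}^2
\le C\,h_K^2\|f+\Delta u_h\|_{0,K}^2 + C\,h_K^2\|\nabla\cdot(G(\nabla u_h)-\nabla u_h)\|_{0,K}^2,
\]
cites the classical interior-residual efficiency $h_K^2\|f+\Delta u_h\|_{0,K}^2\le C(\|\nabla u-\nabla u_h\|_{\omega_K}^2+\mathrm{osc})$ from \cite{mn1}, and kills the divergence piece by an inverse inequality, reducing it to $\|G(\nabla u_h)-\nabla u_h\|_{0,K}^2$, which is already controlled. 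Your direct bubble computation on the constant $r_K=\bar f|_K+\nabla\cdot G(\nabla u_h)|_K$ is equally valid and arguably more self-contained (the key observation that $\int_K\nabla u_h\cdot\nabla b_K=0$ because $\nabla u_h$ is constant and $b_K$ vanishes on $\partial K$ is exactly right), but the paper's route is quicker because it recycles two off-the-shelf bounds rather than reproving one of them. Either way the absorbing step---feeding $\|G(\nabla u_h)-\nabla u_h\|_{0,K}$ back into the first estimate---is the same.
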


\begin{proof}
For the edge jump of the classical residual estimator,	from \cite{mn1}, we obtain
	\begin{equation}\label{step2}
		\sum_{e\in \partial K} h_e\|J_e( \nabla u_h)\|_{0,e}^2 \leq C\left(\|\nabla   u-\nabla u_h\|_{\omega_K}^2 + \sum_{K\in\omega_K}h_K^2\|f - \bar{f}\|_{0,K}^2\right).
	\end{equation}
	From \cite{dy}, for the first term of the estimator defined in \eqref{estomator},  we have
	\begin{equation}\label{step3}
		\|G(\nabla u_h) -\nabla u_h\|_{0,K}^2 \leq C\sum_{e\in\partial K}h_e\|J_e( \nabla u_h)\|_{0,e}^2.
	\end{equation}
	Following from  \eqref{step2} and \eqref{step3}, we obtain 
	\begin{equation}\label{step_1}
		\|G(\nabla u_h) -\nabla u_h\|_{0,K}^2 \leq C\|\nabla   u-\nabla u_h\|_{\omega_K}^2 + C\sum_{K\in\omega_K}h_K^2\|f - \bar{f}\|_{0,K}^2.
	\end{equation}	
For the second term of the estimator, using the inverse inequality, we have 
	\begin{align}\label{step_2}
		h_K^2\|f+\nabla\cdot  G(\nabla u_h )\|_{0,K}^2
		=&h_K^2\|f+\Delta u_h
		+\nabla\cdot  G(\nabla u_h) -\nabla\cdot \nabla u_h\|_{0,K}^2\nonumber\\
		\leq& h_K^2\|f+\Delta u_h\|_{0,K}^2+ h_K^2\|\nabla\cdot( G(\nabla u_h) -\nabla u_h)\|_{0,K}^2\nonumber\\
		\leq& h_K^2\|f+\Delta u_h\|_{0,K}^2+ C\|G(\nabla u_h) -\nabla u_h\|_{0,K}^2.
	\end{align} 
	From \cite{mn1}, we know that
	\begin{equation}\label{step1}
		h_K^2\|f + \Delta u_h\|_{0,K}^2 \leq C\left(\|\nabla   u-\nabla u_h\|_{\omega_K}^2
		+\sum_{K\in\omega_K}h_K^2\|f - \bar{f}\|_{0,K}^2\right).
	\end{equation}
Substituting \eqref{step1} and \eqref{step_1} into \eqref{step_2}, we obtain
\begin{equation}\label{step_3}
		h_K^2\|f+\nabla\cdot  G(\nabla u_h) \|_{0,K}^2	\leq C\left(\|\nabla   u-\nabla u_h\|_{\omega_K}^2
		+\sum_{K\in\omega_K}h_K^2\|f - \bar{f}\|_{0,K}^2\right).
\end{equation}
Then \eqref{loe} follows from \eqref{step_1} and \eqref{step_3}.
\end{proof}

\begin{rem}
Our gradient recovery based on a posteriori error estimator and adaptive algorithm can be generalized to the model problem \eqref{Model problem} with some modifications. For $A\neq I$, the local error estimator is defined as:
\begin{equation}\label{estomator1}
	\eta_K^2=\|{A }^{1/2}(G(\nabla u_h)-\nabla u_h)\|^2_{0,K}+h_K^2\|{A }^{-1/2}({f}+\nabla\cdot({A }G(\nabla u_h)))\|_{0,K}^2,
\end{equation}
and the global error estimator
$$\eta^2=\sum_{K\in \mathcal{T}_h }\eta_K^2=\|{A }^{1/2}(G(\nabla u_h)-\nabla u_h)\|^2_{0,\Omega}+\|{A}^{-1/2}h({f}+\nabla\cdot({A }G(\nabla u_h)))\|_{0,\Omega}^2.$$
\end{rem}

\section{Adaptive finite element method}\label{sec4}
In this section, based on the improved gradient recovery type error estimator \eqref{estomator} and the newest-vertex bisection refinement method, we introduce an adaptive finite element algorithm and analyze its convergence.

In the adaptive algorithm, we adopt the D{\"o}rfler’s bulk strategy \cite{marking1996} to control both error and oscillation. We use two mark strategies: Marking Strategy E deals with the
error estimator and Marking Strategy R does so with the oscillation. 

\begin{itemize}
  \item \textbf{Marking strategy E}: Given a parameter $0<\theta_E<1$, construct  a minimal subset $\mathcal{M}_k$ of $\mathcal{T}_k$ such that
      \begin{equation}\label{E}
      \sum_{K\in\mathcal{M}_k}\eta_K^2(k)\geq\theta_E^2\eta^2(k),
      \end{equation}
      where $\eta^2(k)=\sum_{K\in \mathcal{T}_k}\eta_K^2(k)$,  $k$ is used for the level $k$  in the adaptive loops.
  \item \textbf{Marking strategy R}: Given a parameter $0<\theta_0<1$ and the Marking  Strategy E, enlarge $\mathcal{M}_k$ to a minimal set (denoted again by $\mathcal{M}_k$) such that
      \begin{equation}\label{R}
      \sum_{K\in\mathcal{M}_k}h_K^2(k)\|f-\bar{f}\|_{0,K}^2
      \geq\theta_0^2\|h(k)(f-\bar{f})\|_{0,\Omega}^2.
      \end{equation}
\end{itemize}

\begin{algorithm}
\caption{Adaptive Finite Element Algorithm}
\label{afemalg}
\begin{algorithmic}
\STATE Given two parameters $\theta_E, \theta_0\in (0,1)$, initial mesh $\mathcal{T}_0$. Set $k := 0$.

\STATE \textbf{Step 1} (SOLVE) Solve the discrete equation \eqref{mpfem} for numerical solution $u_k$ on $\mathcal{T}_k$.

\STATE \textbf{Step 2} (ESTIMATE) 
\begin{itemize}
\item[$\bullet$] Computing the local error estimator
                \begin{align*}
                \eta_{K}(k)^2 = \|G(\nabla u_k)-\nabla u_k\|^2_{0,K}+\|h(k)(f+\nabla\cdot G(\nabla u_k))\|_{0,K}^2.
                \end{align*}
\item[$\bullet$]Computing the total error estimator by summing over all $K \in \mathcal{T}_k$
                \begin{align*}
                \eta(k) = \left(\sum_{K \in \mathcal{T}_k}\eta_{K}^2(k)\right)^{\frac{1}{2}}.
                \end{align*}\;
\end{itemize}

\STATE \textbf{Step 3} (MARK) Mark a subset $\mathcal{M}_k\subset \mathcal{T}_k$ satisfying \eqref{E} and \eqref{R}.

\STATE \textbf{Step 4} (REFINE) Refine each triangle $\mathcal{M}_k$ by the newest vertex bisection to
get $\mathcal{T}_{k+1}$.

\STATE \textbf{Step 5} Set $k := k + 1$ and go to Step $1$.
\end{algorithmic}
\end{algorithm}

The new proposed recovery type estimator and the newest vertex bisection refinement with the interior property are employed in the adaptive algorithm. The convergence analysis of the adaptive algorithm presented here is similar to that of \cite{cz}. To establish the convergence of the adaptive method, we start with the following Lemmas.

\begin{lemma}\cite{mn}\label{hf}
Let $0<\gamma_0<1$ be the reduction factor of element size associated with one refinement and $\theta_0$ is given in the Marking Strategy R. Let $\mathcal{M}_k$ be a subset of $\mathcal{T}_k$ satisfying Marking Strategy R, if $\mathcal{T}_{k+1}$ is generated by the refinement from $\mathcal{T}_k$, then 
\begin{equation}\label{bj}
 \|h(k+1)(f-\bar{f})\|\leq\zeta\|h(k)(f-\bar{f})\|,
\end{equation}
where $\zeta=\sqrt{1-(1-\gamma_0^2)\theta_0^2}$.
\end{lemma}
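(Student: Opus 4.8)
The plan is to prove the oscillation contraction in Lemma \ref{hf} by a direct element-wise computation, splitting the triangulation $\mathcal{T}_k$ into the marked part $\mathcal{M}_k$ (where the mesh size genuinely shrinks after refinement) and the unmarked part (where the mesh is unchanged and so the oscillation contribution can only stay the same or decrease). First I would write
\[
\|h(k+1)(f-\bar f)\|^2 = \sum_{K\in\mathcal{T}_{k+1}} h_K^2(k+1)\|f-\bar f_{k+1}\|_{0,K}^2,
\]
and observe that refining a triangle cannot increase the local oscillation: on each child $K'\subset K$ we have $h_{K'}(k+1)\le \gamma_0\, h_K(k)$ when $K\in\mathcal{M}_k$ (this is exactly the meaning of the reduction factor $\gamma_0$ for one newest-vertex bisection step, possibly after finitely many bisections to restore conformity, absorbed into $\gamma_0<1$), and the piecewise constant $\bar f_{k+1}$ is the $L^2$-best approximation on the finer mesh, so $\sum_{K'\subset K}\|f-\bar f_{k+1}\|_{0,K'}^2\le \|f-\bar f_k\|_{0,K}^2$. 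For the unmarked triangles, $h_K$ and the local mean of $f$ are untouched, so those terms are identical in the two sums.

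Carrying this out, I would bound
\[
\|h(k+1)(f-\bar f)\|^2 \le \gamma_0^2 \sum_{K\in\mathcal{M}_k} h_K^2(k)\|f-\bar f_k\|_{0,K}^2 + \sum_{K\in\mathcal{T}_k\setminus\mathcal{M}_k} h_K^2(k)\|f-\bar f_k\|_{0,K}^2,
\]
then rewrite the second sum as $\|h(k)(f-\bar f)\|^2 - \sum_{K\in\mathcal{M}_k} h_K^2(k)\|f-\bar f_k\|_{0,K}^2$, which gives
\[
\|h(k+1)(f-\bar f)\|^2 \le \|h(k)(f-\bar f)\|^2 - (1-\gamma_0^2)\sum_{K\in\mathcal{M}_k} h_K^2(k)\|f-\bar f_k\|_{0,K}^2.
\]
Now invoke Marking Strategy R \eqref{R}, which says precisely that the marked oscillation mass is at least $\theta_0^2\|h(k)(f-\bar f)\|^2$; substituting yields
\[
\|h(k+1)(f-\bar f)\|^2 \le \big(1-(1-\gamma_0^2)\theta_0^2\big)\|h(k)(f-\bar f)\|^2 = \zeta^2 \|h(k)(f-\bar f)\|^2,
\]
and taking square roots gives \eqref{bj}.

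The main obstacle, such as it is, is not analytic but bookkeeping: one must be careful that the constant $\gamma_0<1$ really does control one MARK–REFINE cycle. A single newest-vertex bisection halves a triangle, but enforcing conformity may trigger a bounded number of additional bisections on neighbouring elements; the standard fact (used implicitly in \cite{mn}) is that after one refinement pass every child of a marked element has diameter at most $\gamma_0 h_K$ with $\gamma_0 = 1/\sqrt{2}$, and elements that are bisected only for conformity still have their size non-increasing, so they can be grouped with the ``unmarked'' terms without harm. One also needs the elementary monotonicity of the $L^2$-projection-based oscillation under mesh refinement, i.e. that replacing $\bar f_k$ by $\bar f_{k+1}$ on a refined patch does not increase $\|f-\bar f\|_{0,K}$; this is immediate since $\bar f_{k+1}|_{K'}$ is the mean of $f$ on $K'$ and hence the $L^2$-best piecewise constant on the children. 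With these two observations in place the estimate is a one-line telescoping argument, exactly as above.
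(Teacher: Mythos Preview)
Your argument is correct and is precisely the standard oscillation-reduction proof from \cite{mn}; the paper does not reproduce a proof of this lemma but simply cites that reference, so there is nothing further to compare.
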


\begin{lemma}\cite{mn}\label{ff}
For the  residual type a posteriori error estimator defined in \eqref{res}, if each $e\in\mathcal{E}_k$ contains a vertex of $\mathcal{T}_{k+1}$ as its interior point, there exists a positive constant $C$ such that 
\begin{equation}\label{bianjie111}
\eta_{res,K}^2(k)\leq C\left(\|\nabla (u_{k+1}-u_k)\|_{\omega_e}^2+\|h(k)(f-\bar{f})\|_{0,\omega_e}^2\right),
\end{equation}
where $\eta_{res,K}^2(k)=\|h(k)f\|_{0,K}^2+	\sum\limits_{e\in \partial K} h_e(k)\|J_e(\nabla u_k)\|_{0,e}^2$
\end{lemma}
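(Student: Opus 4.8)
The plan is to run the classical interior–node / bubble–function argument of \cite{mn}. The backbone is that $V_k^0\subset V_{k+1}^0$, so the level-$(k+1)$ discrete equation $(\nabla u_{k+1},\nabla v)=(f,v)$ holds for every $v\in V_{k+1}^0$, which rewrites as
\[
(f,v) = (\nabla u_{k+1},\nabla v) = (\nabla u_k,\nabla v) + (\nabla(u_{k+1}-u_k),\nabla v),\qquad\forall\,v\in V_{k+1}^0.
\]
Since $u_k$ is piecewise linear, $\Delta u_k=0$ on each $K\in\mathcal{T}_k$; integrating $(\nabla u_k,\nabla v)$ by parts elementwise and collecting interelement terms (using $v|_{\partial\Omega}=0$) turns it into $\sum_{e\in\mathcal{E}_\Omega}\int_e J_e(\nabla u_k)\,v$. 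Hence
\[
\sum_{e\in\mathcal{E}_\Omega}\int_e J_e(\nabla u_k)\,v = (f,v) - (\nabla(u_{k+1}-u_k),\nabla v),\qquad\forall\,v\in V_{k+1}^0,
\]
so the element residual of $(u_k,\mathcal{T}_k)$ reduces to $f$ and the interelement residual is precisely the normal-derivative jump, with everything else absorbed into $\nabla(u_{k+1}-u_k)$. The admissible test functions will be the nodal hat functions of $V_{k+1}$ sitting at the new vertices in the interiors of the edges (and elements) of $\mathcal{T}_k$: by the hypothesis these lie in $V_{k+1}^0$ and have the localized supports needed.

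\textbf{Element residual.} On a triangle $K$, pick an interior-vertex hat function $b_K\in V_{k+1}^0$ supported in $K$ and test with $v=\bar f_K h_K^2 b_K$. Then $\sum_e\int_e J_e(\nabla u_k)v=0$, so the identity collapses to $(f,v)=(\nabla(u_{k+1}-u_k),\nabla v)$. Splitting $f=\bar f_K+(f-\bar f_K)$, bounding $(\bar f_K,v)$ from below via $\int_K b_K\gtrsim|K|$, estimating $\|v\|_{0,K}$ and $\|\nabla v\|_{0,K}$ by scaling, and finishing with Young's inequality, I expect
\[
\|h_K f\|_{0,K}^2 \le C\big(\|\nabla(u_{k+1}-u_k)\|_{0,K}^2 + \|h_K(f-\bar f)\|_{0,K}^2\big).
\]

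\textbf{Jump residual.} For each interior edge $e\subset\partial K$, pick the $V_{k+1}$ hat function $b_e\in V_{k+1}^0$ at the interior vertex of $\mathcal{T}_{k+1}$ on $e$, supported in $\omega_e=K_e^+\cup K_e^-$, and test with $v=J_e(\nabla u_k)\,h_e\,b_e$. Then $\sum_{e'}\int_{e'}J_{e'}(\nabla u_k)v=\int_e J_e(\nabla u_k)v$, and since $J_e(\nabla u_k)$ is constant on $e$ the scaling $\int_e b_e\gtrsim|e|$ gives $\int_e J_e(\nabla u_k)v\gtrsim h_e\|J_e(\nabla u_k)\|_{0,e}^2$. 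On the other side split $(f,v)=(\bar f,v)+(f-\bar f,v)$, control the $\bar f$–part over $K_e^\pm$ by the element estimate just proved, use $h_e\le h_{K'}$ for $K'\in\omega_e$, and apply scaling plus Young to obtain
\[
h_e\|J_e(\nabla u_k)\|_{0,e}^2 \le C\big(\|\nabla(u_{k+1}-u_k)\|_{\omega_e}^2 + \|h(k)(f-\bar f)\|_{0,\omega_e}^2\big).
\]
Summing the element estimate together with the (at most three) edge estimates over $e\in\partial K$, and using $K\subset\omega_e$, gives the claim.

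The step I expect to be the main obstacle is the homogeneity bookkeeping: one must keep precise track of the powers of $h_K$ and $h_e$ in each scaling/inverse/trace inequality so that every intermediate quantity is ultimately recast either as an energy increment $\|\nabla(u_{k+1}-u_k)\|$ or as a genuine oscillation $\|h(f-\bar f)\|$. In particular, the $\bar f$–contribution of $(f,v)$ in the jump estimate is exactly what forces the element estimate to be invoked there, so that no stray $\|hf\|$ term survives. A secondary point to verify is that the interior vertex on $e$ stays bounded away from the endpoints of $e$ (so that $\int_e b_e\sim|e|$ with a shape-regularity constant) and that $b_K,b_e$ genuinely belong to $V_{k+1}^0$; both follow from the newest-vertex-bisection structure encoded in the interior-node hypothesis.
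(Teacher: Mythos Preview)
The paper does not supply a proof for this lemma; it is simply quoted from \cite{mn}. Your proposal reproduces precisely the interior--node bubble argument of Morin--Nochetto--Siebert: derive the error representation from the level-$(k{+}1)$ discrete equation, kill the element residual with an interior hat function $b_K$, and kill the edge jump with the hat function $b_e$ at the new edge midpoint, each time splitting $f=\bar f+(f-\bar f)$ so that only oscillation survives. This is the correct argument and matches the cited source.

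One point to tighten: the hypothesis of the lemma, as written, only asserts a vertex of $\mathcal{T}_{k+1}$ in the interior of each \emph{edge} $e\in\mathcal{E}_k$, whereas your element-residual step needs a vertex of $\mathcal{T}_{k+1}$ in the interior of each \emph{element} $K$ (so that $b_K\in V_{k+1}^0$ with $\mathrm{supp}\,b_K\subset K$ exists). In \cite{mn} this is the separate ``Interior Node Property,'' and in the present paper it is tacitly ensured by performing enough newest-vertex bisections in Step~4 of the algorithm; but it does not follow from the edge hypothesis alone. So either invoke the full interior node property (as \cite{mn} does) or note explicitly that the refinement rule used here produces interior element vertices as well.
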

 
\begin{lemma}\label{houyan}There exists a positive constant $C_l>0$ such that
  \begin{align}\label{biaoji}
  {{\eta}}^2(k)\leq C_l\left(\|\nabla (u_{k+1}-u_k)\|^2+\|h(k)(f-\bar{f})\|^2\right).
  \end{align}
\end{lemma}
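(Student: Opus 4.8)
The plan is to reduce the new estimator to the classical residual estimator $\eta_{res,K}(k)$ elementwise, and then invoke Marking Strategy E together with Lemma \ref{ff}. First I would show that $\eta_K^2(k)\le C\,\eta_{res,K}^2(k)$ for every $K\in\mathcal{T}_k$, with $C$ depending only on shape regularity and the recovery operator. The jump part of $\eta_K$ is controlled by the Du--Yan bound \eqref{step3}. For the divergence-type residual term, I use that $u_k$ is piecewise linear, so $\Delta u_k|_K=0$ and hence $f+\nabla\cdot G(\nabla u_k)=f+\nabla\cdot\big(G(\nabla u_k)-\nabla u_k\big)$ on $K$; applying the triangle inequality, an inverse inequality to the elementwise linear field $G(\nabla u_k)-\nabla u_k$, and \eqref{step3} once more, we obtain
\[
h_K^2\|f+\nabla\cdot G(\nabla u_k)\|_{0,K}^2
\le C\Big(h_K^2\|f\|_{0,K}^2+\sum_{e\in\partial K}h_e\|J_e(\nabla u_k)\|_{0,e}^2\Big)=C\,\eta_{res,K}^2(k),
\]
which is exactly the chain of estimates \eqref{step_2} specialised to the present setting. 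Adding the jump and residual contributions gives $\eta_K^2(k)\le C\,\eta_{res,K}^2(k)$.

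Next, by Marking Strategy E and the previous step,
\[
\theta_E^2\,\eta^2(k)\le\sum_{K\in\mathcal{M}_k}\eta_K^2(k)\le C\sum_{K\in\mathcal{M}_k}\eta_{res,K}^2(k).
\]
Since the REFINE step bisects each marked triangle often enough that all of its edges contain a vertex of $\mathcal{T}_{k+1}$ as an interior point, Lemma \ref{ff} applies to every $K\in\mathcal{M}_k$ and yields $\eta_{res,K}^2(k)\le C\big(\|\nabla(u_{k+1}-u_k)\|_{\omega_K}^2+\|h(k)(f-\bar f)\|_{0,\omega_K}^2\big)$, where $\omega_K$ is the union of the edge patches of $K$. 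Summing over $K\in\mathcal{M}_k\subset\mathcal{T}_k$ and using shape regularity (the patches $\omega_K$ overlap only a bounded number of times) to collapse the local norms into global ones, we get $\sum_{K\in\mathcal{M}_k}\eta_{res,K}^2(k)\le C\big(\|\nabla(u_{k+1}-u_k)\|^2+\|h(k)(f-\bar f)\|^2\big)$. Combining this with the marking inequality above gives \eqref{biaoji} with $C_l=C\theta_E^{-2}$.

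The genuinely new ingredient, and hence the main obstacle, is the elementwise comparison $\eta_K^2(k)\le C\,\eta_{res,K}^2(k)$: one must dominate the recovered-gradient divergence residual $\|h(k)(f+\nabla\cdot G(\nabla u_k))\|_{0,K}$ by the standard residual quantities, which relies on both $\Delta u_k|_K=0$ and the inverse estimate for $G(\nabla u_k)-\nabla u_k$. After that, the argument follows the familiar Morin--Nochetto--Siebert / Chen--Zou template. Two points still require care: checking that the REFINE routine really enforces the interior-node property assumed in Lemma \ref{ff} on all marked elements, and tracking the finite-overlap constants when passing from the local patches $\omega_K$ to the global norms over $\Omega$.
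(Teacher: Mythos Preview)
Your argument is correct but follows a genuinely different route from the paper. The paper does \emph{not} use Marking Strategy~E in this lemma at all. Instead it chains the global efficiency of the new estimator (summing \eqref{step_1} and \eqref{step_3} over $\mathcal{T}_k$) to get $\eta^2(k)\le C\big(\|\nabla(u-u_k)\|^2+\|h(k)(f-\bar f)\|^2\big)$, then invokes the global reliability of the residual estimator \eqref{res_estimate} to replace $\|\nabla(u-u_k)\|^2$ by $\sum_{K\in\mathcal{T}_k}\eta_{res,K}^2(k)$, and finally applies Lemma~\ref{ff} summed over \emph{all} $K\in\mathcal{T}_k$. In short, the paper takes the detour $\eta^2(k)\to\|\nabla(u-u_k)\|^2\to\eta_{res}^2(k)\to\|\nabla(u_{k+1}-u_k)\|^2$, whereas you stay at the elementwise level with the direct comparison $\eta_K^2\le C\,\eta_{res,K}^2$ and restrict to $\mathcal{M}_k$ via marking before invoking Lemma~\ref{ff}.

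What each approach buys: your route needs the interior-node hypothesis of Lemma~\ref{ff} only on edges of \emph{marked} elements (the natural assumption for adaptive refinement), at the price of a constant $C_l$ that carries the factor~$\theta_E^{-2}$. The paper's route avoids any dependence on~$\theta_E$, but its final step tacitly requires the interior-node property on \emph{every} edge of $\mathcal{T}_k$, which is a stronger refinement assumption than what Algorithm~\ref{afemalg} explicitly enforces. Your elementwise bound $\eta_K^2\le C\,\eta_{res,K}^2$ is indeed the same computation as \eqref{step_2}--\eqref{step3} specialized to $\Delta u_k|_K=0$, so the ``new ingredient'' you flag is already contained in the proof of Theorem~\ref{youxiao}; the real distinction between the two proofs is whether one passes through the exact error and the full mesh, or through the marking and the marked set.
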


\begin{proof}
Using  the estimations \eqref{step_1}, \eqref{step_3}, \eqref{res_global} and \eqref{bianjie111}, we obtain
  \begin{align*}
  {{\eta}}^2(k)&=\|G(\nabla u_k)-\nabla u_k\|^2+\|h(k)(f+\nabla\cdot G(\nabla u_k))\|^2\\
  &\leq  C\left(\|u-u_k\|^2+\|h(k)(f-\bar{f})\|^2\right)\\
  &\leq C\left(\sum\limits_{K\in\mathcal{T}_k}\eta_{res, K}^2(k)+\|h(k)(f-\bar{f})\|^2\right)\\
  &\leq C_l\left(\|\nabla( u_{k+1}-u_k)\|^2+\|h(k)(f-\bar{f})\|^2\right),
  \end{align*}
  which completes the proof.
  \end{proof}

 
\begin{lemma}\label{udiedai}
  Under the conditions of Lemma \ref{houyan} and Theorem \ref{cthe1}, it holds that
\begin{equation}\label{are}
  \|\nabla (u_{k+1}-u_k)\|^2\geq\delta_1\|\nabla (u-u_k)\|^2-
  \|h(k)(f-\bar{f})\|^2.
\end{equation}
 with $\delta_1=\frac{1}{C_lC_r}$.
\end{lemma}

\begin{proof}
From Theorem \ref{cthe1}, for $u_k\in V_h^0(k)$, there exists a positive constant $C_r$ such that
\[
\eta^2(k)\geq \frac{1}{C_r}\|\nabla (u-u_k)\|^2.
\]
According to \eqref{biaoji}, we have 
\[\begin{aligned}
\|\nabla (u_{k+1}-u_k)\|^2\geq &\frac{1}{C_\ell}\eta^2(k)- \|h(k)(f-\bar{f})\|^2\\
\geq &\frac{1}{C_\ell C_r}\|\nabla (u-u_k)\|^2- \|h(k)(f-\bar{f})\|^2.
\end{aligned}\]
\end{proof}
 
\begin{theorem}
Set $\delta=\max\{\sqrt{1-\delta_1},\zeta\}$, $\delta_1<1$ and $\zeta$ are given in Lemma $\ref{udiedai}$ and Lemma $\ref{hf}$. Under the condition of the Lemma \ref{udiedai}, the sequence \{$u_k$\} generated by the adaptive finite element algorithm satisfies 
\begin{equation}\label{ss}
  \|\nabla (u-u_k)\|\leq C_0\delta^k
\end{equation}
 with $C_0=\sqrt{\|\nabla( u-u_0)\|^2+\|h(0)(f-\bar{f})\|^2}$.
\end{theorem}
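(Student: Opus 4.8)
The plan is to establish a contraction in the combined quantity $a_k^2 := \|\nabla(u-u_k)\|^2 + \|h(k)(f-\bar f)\|^2$ and then iterate. The two ingredients are already in place: Lemma~\ref{udiedai} gives the error-reduction inequality $\|\nabla(u_{k+1}-u_k)\|^2 \ge \delta_1\|\nabla(u-u_k)\|^2 - \|h(k)(f-\bar f)\|^2$, and Lemma~\ref{hf} gives the oscillation reduction $\|h(k+1)(f-\bar f)\|^2 \le \zeta^2\|h(k)(f-\bar f)\|^2$. First I would invoke the Pythagoras-type orthogonality identity for the nested spaces $V_h^0(k)\subset V_h^0(k+1)$: since $u_k$ is the Galerkin projection of $u$ onto $V_h^0(k)$ and $u_{k+1}$ the projection onto $V_h^0(k+1)$, Galerkin orthogonality yields
\begin{equation*}
\|\nabla(u-u_k)\|^2 = \|\nabla(u-u_{k+1})\|^2 + \|\nabla(u_{k+1}-u_k)\|^2.
\end{equation*}
Substituting the lower bound for $\|\nabla(u_{k+1}-u_k)\|^2$ from Lemma~\ref{udiedai} into this identity gives
\begin{equation*}
\|\nabla(u-u_{k+1})\|^2 \le (1-\delta_1)\|\nabla(u-u_k)\|^2 + \|h(k)(f-\bar f)\|^2.
\end{equation*}

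Next I would add the oscillation bound from Lemma~\ref{hf}. Combining the last display with $\|h(k+1)(f-\bar f)\|^2 \le \zeta^2\|h(k)(f-\bar f)\|^2$ and recalling $\delta = \max\{\sqrt{1-\delta_1},\zeta\}$, so that $1-\delta_1 \le \delta^2$ and $\zeta^2 \le \delta^2$, one obtains
\begin{equation*}
a_{k+1}^2 = \|\nabla(u-u_{k+1})\|^2 + \|h(k+1)(f-\bar f)\|^2 \le \delta^2\|\nabla(u-u_k)\|^2 + (1+\zeta^2)\|h(k)(f-\bar f)\|^2.
\end{equation*}
There is a mismatch here: the coefficient of the oscillation term is $1+\zeta^2$ rather than $\delta^2$, so a naive sum does not immediately contract. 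The honest fix is to carry the oscillation term with a small weight, i.e. to work with $\tilde a_k^2 := \|\nabla(u-u_k)\|^2 + \mu\|h(k)(f-\bar f)\|^2$ for a suitably small $\mu>0$, or—matching the statement as written—to absorb the extra factor by noting the oscillation sequence itself decays geometrically with ratio $\zeta<1$, so $\sum_k \|h(k)(f-\bar f)\|^2$ converges and can be folded into the constant. I would then iterate to reach $a_k^2 \le \delta^{2k} a_0^2$, which with $C_0 = \sqrt{\|\nabla(u-u_0)\|^2 + \|h(0)(f-\bar f)\|^2}$ gives $\|\nabla(u-u_k)\| \le a_k \le C_0\delta^k$, the claimed bound \eqref{ss}.

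The main obstacle is precisely the bookkeeping on the oscillation term: the clean contraction $a_{k+1}\le\delta a_k$ does not follow directly because refinement only guarantees $\|h(k+1)(f-\bar f)\|\le\zeta\|h(k)(f-\bar f)\|$ while the error inequality \emph{adds} a full copy of $\|h(k)(f-\bar f)\|^2$ back in. Resolving this requires either (i) choosing the relative weight $\mu$ in the combined quantity small enough that $(1-\delta_1)+\mu \le \delta^2\cdot(\text{something})$ and $\mu\zeta^2 + 1 \le \mu$—which forces a constraint like $\mu > 1/(1-\zeta^2)$ and a correspondingly adjusted $\delta$—or (ii) summing the geometric oscillation series separately and accepting a larger constant $C_0$. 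Everything else (the Pythagoras identity, plugging in the two lemmas) is routine; the delicate point is making the constants in the definition of $\delta$ and $C_0$ consistent with the inequality one can actually prove, and I would check whether the paper intends the weighted quantity or a redefined $C_0$.
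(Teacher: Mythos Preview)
Your derivation up through the inequality
\[
\|\nabla(u-u_{k+1})\|^2 \le (1-\delta_1)\|\nabla(u-u_k)\|^2 + \|h(k)(f-\bar f)\|^2
\]
is exactly the paper's argument: Pythagorean identity from nested Galerkin spaces plus Lemma~\ref{udiedai}.

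Where you diverge is in how to close the iteration. The paper does not form the scalar sum $a_k^2$; instead it sets $\mathbf{y}_k=(\|\nabla(u-u_k)\|,\ \|h(k)(f-\bar f)\|)^T$, observes the componentwise inequality $\mathbf{y}_{k+1}\le B\mathbf{y}_k$ with
\[
B=\begin{pmatrix}(1-\delta_1)^{1/2}&1\\0&\zeta\end{pmatrix},
\]
iterates to $\mathbf{y}_k\le B^k\mathbf{y}_0$, and then asserts $\|\mathbf{y}_k\|_{\ell^2}\le\rho(B^k)\|\mathbf{y}_0\|_{\ell^2}=\delta^k\|\mathbf{y}_0\|_{\ell^2}$.

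Your instinct that the bookkeeping does not close with exactly the stated $\delta$ and $C_0$ is well founded: the step $\|B^k\mathbf{y}_0\|_{\ell^2}\le\rho(B^k)\|\mathbf{y}_0\|_{\ell^2}$ is not generally valid, since the spectral radius bounds the operator norm from below, not above, and $B$ here is not normal. Computing $B^k$ explicitly (upper-triangular with off-diagonal entry $(a^k-b^k)/(a-b)$ for $a=(1-\delta_1)^{1/2}$, $b=\zeta$) shows one really gets $\|\nabla(u-u_k)\|\le C\delta^k$ with a constant $C$ depending on $|a-b|^{-1}$, or a $k\delta^{k-1}$ factor when $a=b$. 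Your proposed fixes---a weighted combination or summing the oscillation series separately---are the honest ways to obtain a clean geometric bound, at the cost of a different $C_0$ (or $\delta$) than the one in the statement. So your analysis is more careful than the paper's on this point; the paper's matrix formulation is a compact way to package the two recursions but the final spectral-radius step glosses over precisely the difficulty you flagged.
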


\begin{proof}
Since $V_h^0(k)\subset V_h^0(k+1)$, from the orthogonality of the finite element approximations and  Lemma \ref{udiedai}, it holds  that
\begin{align*}
  \|\nabla (u-u_k)\|^2&=\|\nabla (u-u_{k+1})\|^2+\|\nabla (u_{k+1}-u_k)\|^2\\
  &\geq \|\nabla (u-u_{k+1})\|^2+\delta_1 \|\nabla (u-u_k)\|^2-\|h(k)(f-\bar{f})\|^2,
\end{align*}
then we have
\begin{equation}\label{shoulian}
  \|\nabla (u-u_{k+1})\|^2\leq(1-\delta_1) \|\nabla (u-u_k)\|^2+\|h(k)(f-\bar{f})\|^2.
\end{equation}
Let $\textbf{y}_k=(\|\nabla (u-u_k)\|,\|h(k)(f-\bar{f})\|)^T$, and
\[B=\begin{pmatrix}
            (1-\delta_1)^{1/2} & 1 \\
            0 & \zeta
          \end{pmatrix},\]
it follows from \eqref{shoulian} and Lemma \ref{hf} that
$$\textbf{y}_k\leq B\textbf{y}_{k-1}\leq B^k\textbf{y}_0.$$
Hence,
$$\|\nabla (u-u_k)\|\leq \|\textbf{y}_k\|_{l^2}\leq \rho(B^k)\|\textbf{y}_0\|_{l^2}.$$
Further, $\rho(B^k)=\max\{(1-\delta_1)^{k/2},\zeta^k\}=\delta^k$, $C_0=\|\textbf{y}_0\|_{l^2}$,
which leads to the proof.
\end{proof}

\section{Numerical experiments}\label{sec5}
\setcounter{equation}{0}
We test the performance of the proposed gradient recovery-based error estimator \eqref{estomator} and adaptive algorithm with six examples. 
We first investigate the performance of the improved error estimator in Example \ref{exam3_1} and Example \ref{exam3_2}, where the performance of classical residual type and gradient recovery type estimators are very poor. 
Then, we numerically test the performance of the improved error estimator on problems with the inner layer or in the three-dimensional concave domain, where the solutions show line singularity. 
At last, we extend our error estimator to the model problem \eqref{Model problem} with continuous variable or piecewise constant coefficients $A$.

\begin{example} \label{test2}
Apply the adaptive finite element algorithm with gradient recovery-based error estimator \eqref{estomator} to Example \ref{exam3_1}. 
Figure \ref{test_2} presents the numerical solution, adaptive mesh, history of the error and estimator. Similar to Figure \ref{residual_result} (a), which is obtained with the classical residual type error estimator, error estimator \eqref{estomator} also guides the mesh locally refined at the corner $(0,0)$, aligning with the singularity of the solution. Figure \ref{test_2} (c) plots the history of gradient error $\|\nabla u-\nabla u_h\|_{0,\Omega}$ and error estimator $\eta$. Note that the error estimator $\eta$ contains two parts, $\|\nabla u_h-G(\nabla u_h)\|$ and  $\| \nabla u-  G(\nabla u_h)\|$, we also plot the two terms separately to examine which part dominates the error estimator. It can be seen clearly that the error estimator is asymptotically exact, which in turn drives the adaptive algorithm to achieve the quasi-optimal convergence rate $O(N^{-1/2})$. Moreover, the recovered gradient $G(\nabla u_h)$ is superconvergent to $\nabla u$, thus $\|\nabla u_h-G(\nabla u_h)\|$ is close to error estimator $\eta$ and accounts for the major part. Moreover, the term $\|h(f+\nabla \cdot G(\nabla u_h))\|$ of error estimator $\eta$ is much smaller and also superconvergent. 

\begin{figure}[!htbp]
	\begin{minipage}{0.48\linewidth}
		\begin{center}
			\includegraphics[width=7cm]{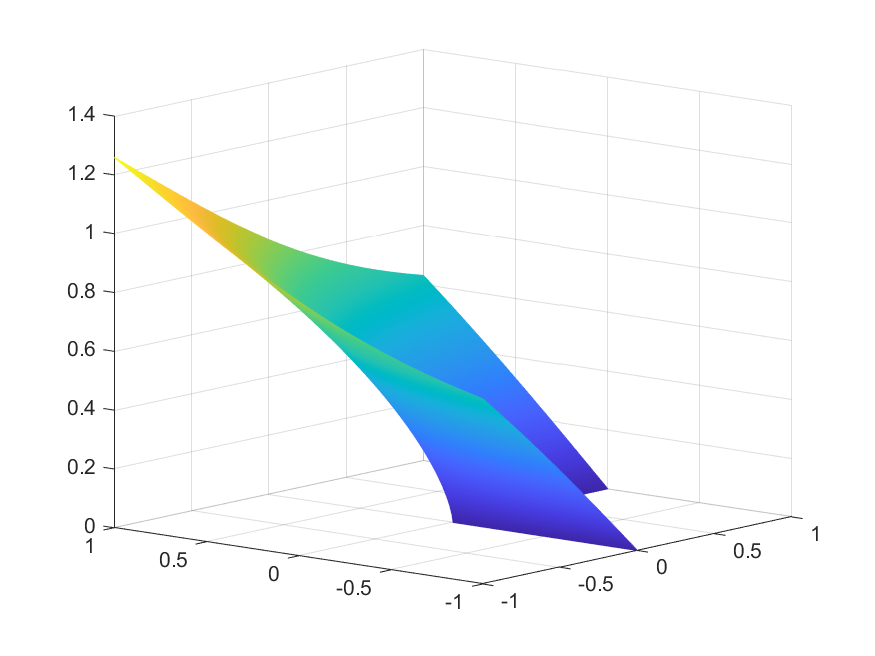}\\
			(a) Numerical solution.
		\end{center}
	\end{minipage}
	\begin{minipage}{0.48\linewidth}
		\begin{center}
			\includegraphics[width=7cm]{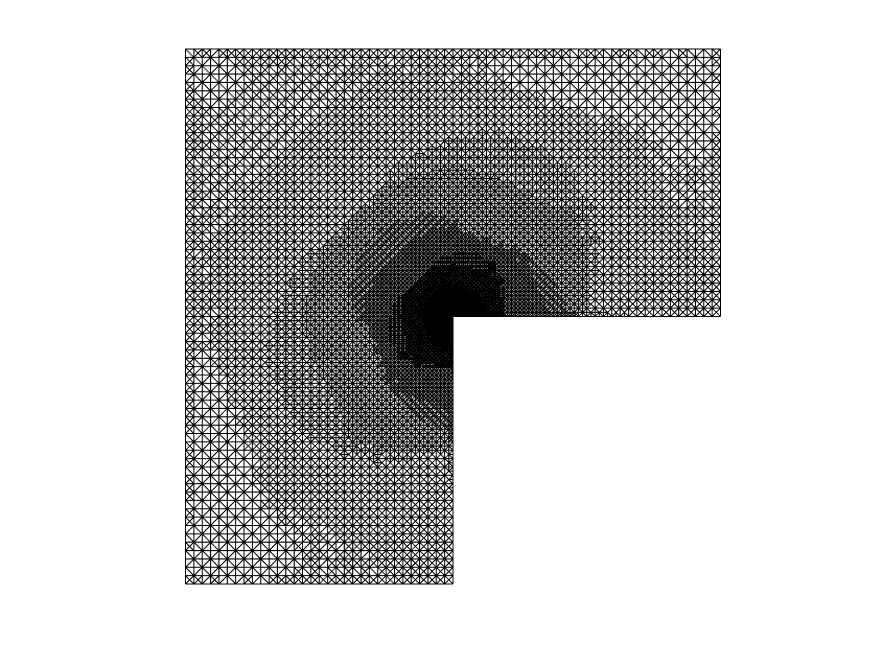}\\
			(b) Adaptive mesh.
		\end{center}
	\end{minipage}
	\begin{minipage}{0.48\linewidth}
		\begin{center}			
			\includegraphics[width=6cm]{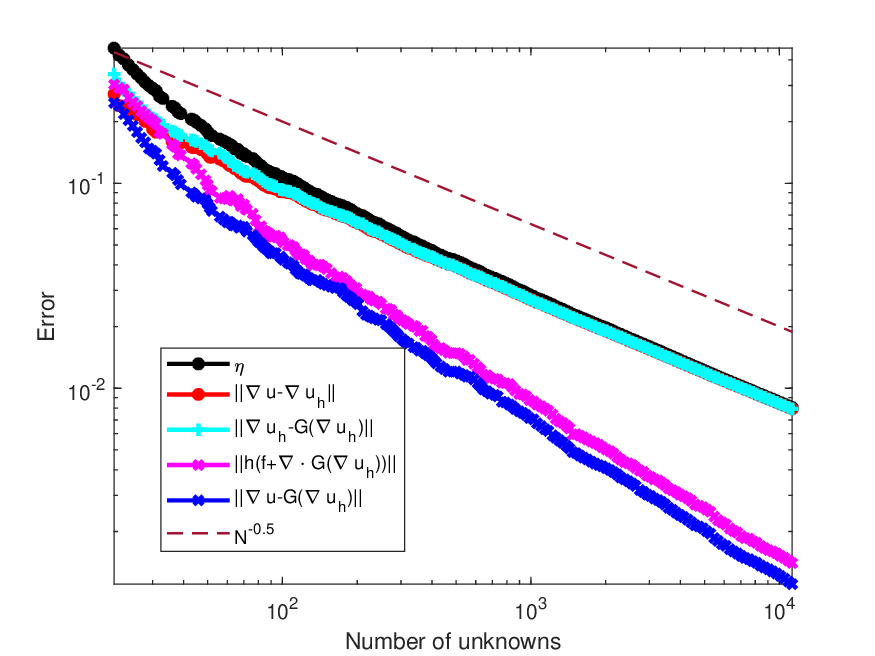}\\
			(c) History of error and error estimator.
		\end{center}
	\end{minipage}
	\caption{Numerical results of Example \ref{test2} with error estimator \eqref{estomator}.}\label{test_2}
\end{figure}

\end{example}

\begin{example} \label{test1}
Example \ref{exam3_2} shows that the classical gradient recovery-based error estimator $\|G(\nabla u_h)-\nabla u_h\|$ does not work for this specific case. 
We now apply our improved error estimator to this example to show the necessity of the term $\|h({f}+\nabla\cdot(A G(\nabla u_h))\|$ in recovery type error estimator.  
Figure \ref{test_1} displays the initial mesh, adaptive mesh, numerical solution, and error estimator, respectively. In comparison to the numerical results of Example \ref{exam3_2}, our improved error estimator can derive the adaptive algorithm to avoid the state that gradient recovery can not provide better gradient approximation. Figure \ref{test_1} (d) shows that $\|\nabla u_h-G(\nabla u_h)\|$ dominates $\eta$,  and $\|h(f+\nabla\cdot G(\nabla u_h))\|$ is superconvergent.
 
\begin{figure}[!htbp]
	\begin{minipage}{0.48\linewidth}
		\begin{center}
			\includegraphics[width=7cm]{figure/initialmeshnocolor.eps}\\			
			(a) Initial mesh.
		\end{center}
	\end{minipage}
	\begin{minipage}{0.48\linewidth}
		\begin{center}
			\includegraphics[width=7cm]{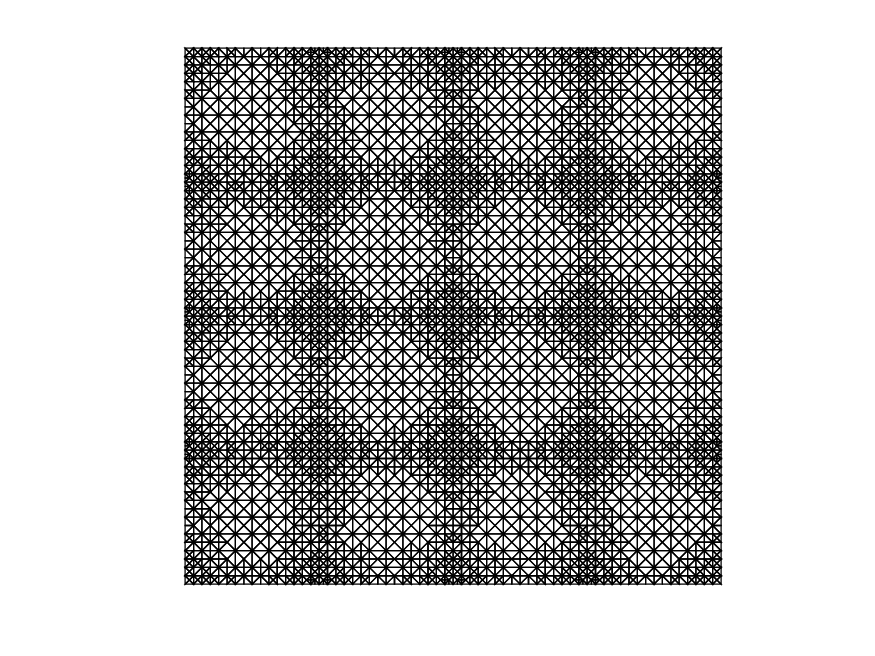}\\			
			(b) Adaptive mesh.
		\end{center}
	\end{minipage} 
	\begin{minipage}{0.48\linewidth}
		\begin{center}			
			\includegraphics[width=6cm]{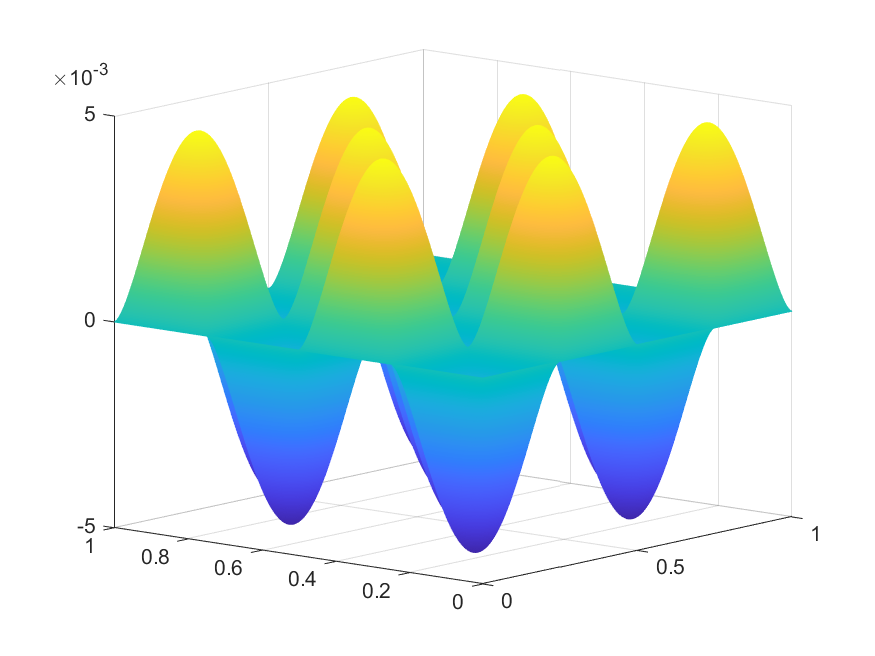}\\
			(c) Numerical solution.
		\end{center}
	\end{minipage}
	\begin{minipage}{0.48\linewidth}
		\begin{center}	
			\includegraphics[width=6cm]{
				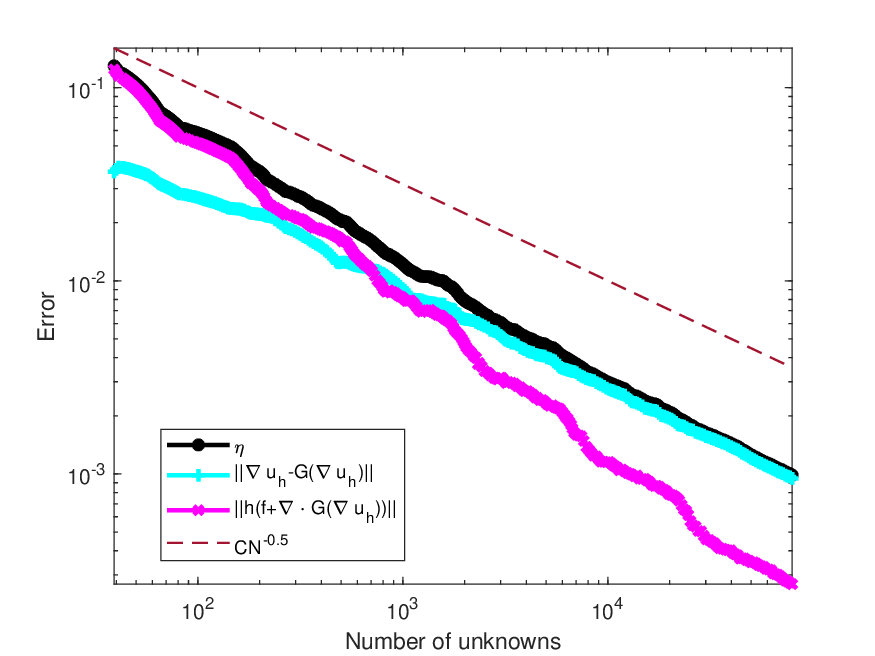}\\
			(d) History of the error estimator.
		\end{center}
	\end{minipage}
	\caption{Numerical results of Example \ref{test1}.}\label{test_1}
\end{figure}
\end{example}

\begin{example} \label{test3}
In this example, we consider the Poisson equation with the Dirichlet boundary condition given by the exact solution 
\[u=\text{atan}(S(\sqrt{(x-1.25)^2+(y+0.25)^2}-\pi/3)),\]
where $S=60$ reflects the steepness of the inner slope, and $f$ and $g$ are obtained from the exact solution $u$. 

Figure \ref{test_3} plots the numerical solution, locally refined mesh, errors of the gradient and error estimators. 
It is easy to see that the error estimator successfully guide the mesh refinement along the interface, where the solution have large gradient. 
Figure \ref{test_3} (c) reports the convergence history of errors and error estimator. Notice that the convergence rates of $\|\nabla u-\nabla u_h\|$ and error estimator $\eta$ are quasi-optimal, $\| \nabla u- G(\nabla u_h)\|$ and $\|h(f+\nabla\cdot G(\nabla u_h))\|$ are superconvergent. It can be seen clearly that the error estimator $\eta$ is asymptotically exact, and the term $\|h(f+\nabla\cdot G(\nabla u_h))\|$ is much smaller than the term $\|\nabla u_h-G(\nabla u_h)\|$.

\begin{figure}[!htbp]
	\begin{minipage}{0.48\linewidth}
		\begin{center}
			\includegraphics[width=7cm]{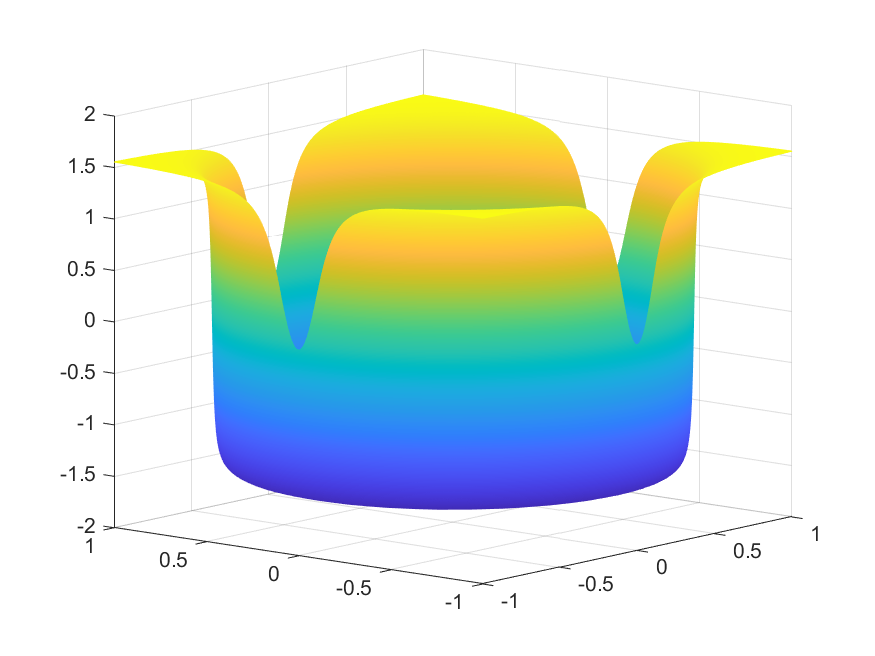}\\
			(a) Numerical solution.
		\end{center}
	\end{minipage}
	\begin{minipage}{0.48\linewidth}
		\begin{center}
			\includegraphics[width=7cm]{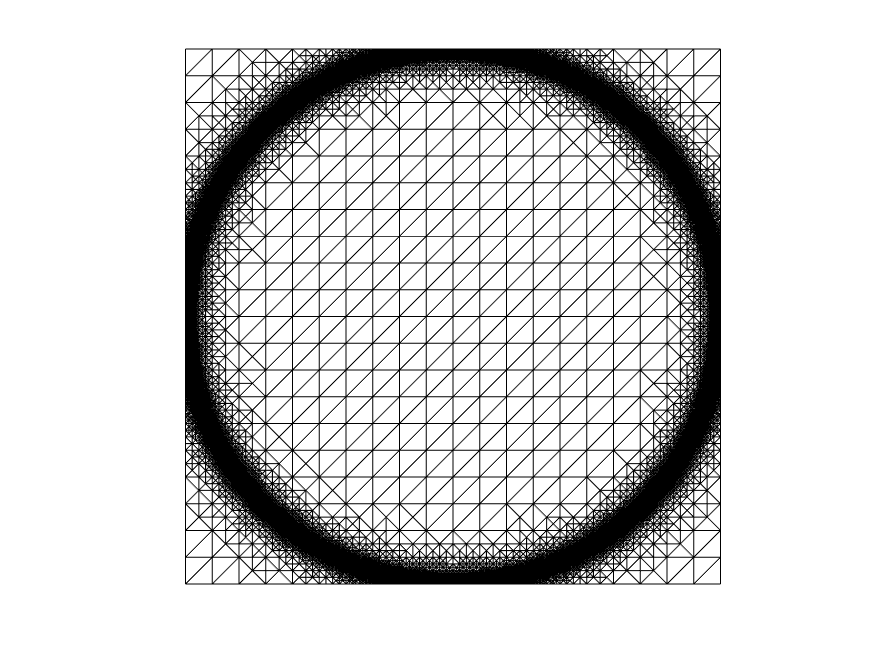}\\
			(b) Adaptive mesh.
		\end{center}
	\end{minipage}
	\begin{minipage}{0.48\linewidth}
		\begin{center}			
			\includegraphics[width=6cm]{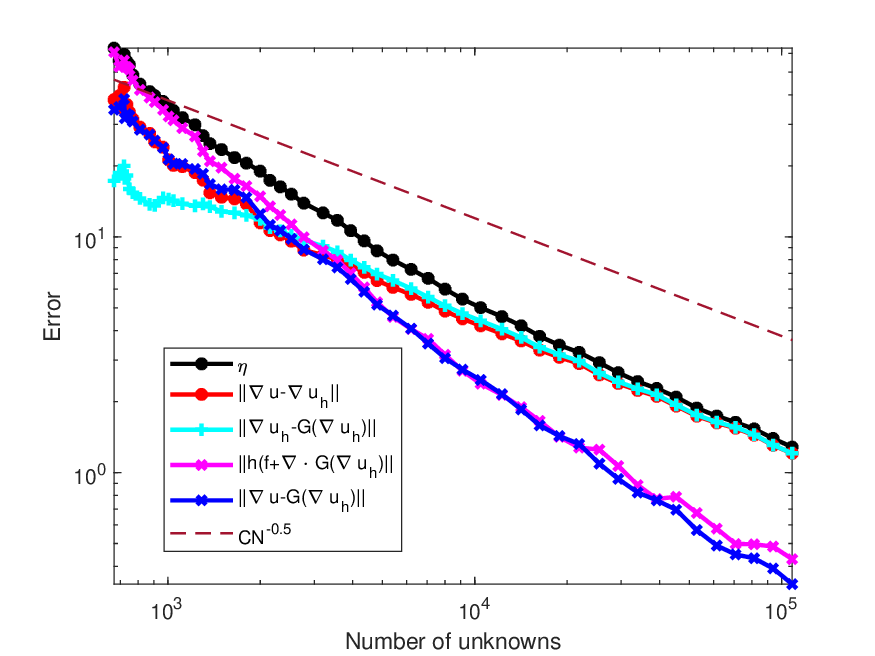}\\
			(c) History of error and error estimator.
		\end{center}
	\end{minipage}
	\caption{Numerical results of Example \ref{test3}.}\label{test_3}
\end{figure}

\end{example}

\begin{example} \label{test3d}
In this example, we consider the Poisson equation with the Dirichlet boundary condition on a 3D domain $\Omega=(-1,1)^3\backslash (0,1)\times(-1,0)$, the exact solution  
\[u=r^{2/3}\sin(2\theta/3),\]
$f$ and $g$ are obtained by the exact solution $u$.

\begin{figure}[!htbp]
	\begin{minipage}{0.48\linewidth}
		\begin{center}
			\includegraphics[width=7cm]{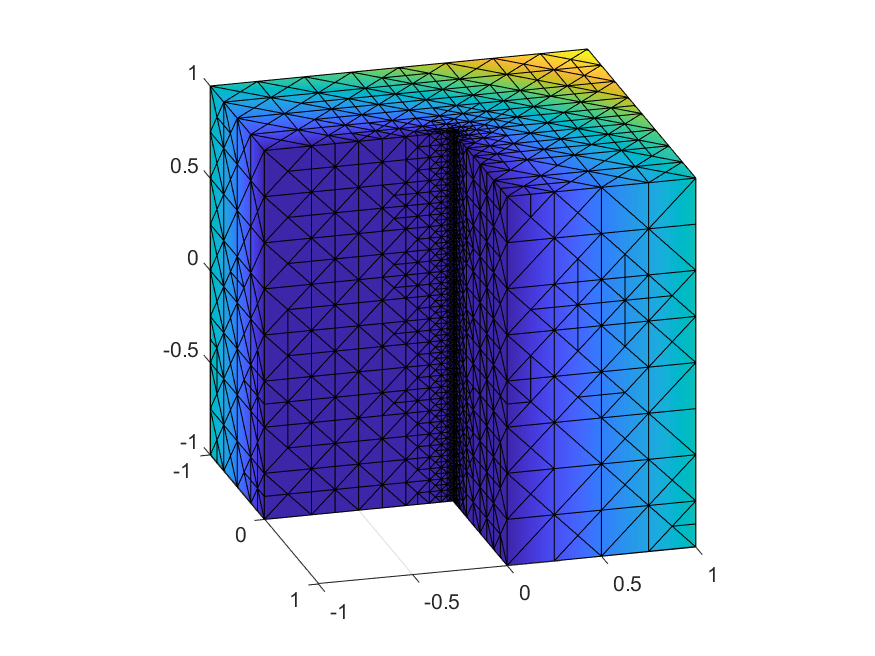}\\ 
			(a) Numerical solution.
		\end{center}
	\end{minipage}
	\begin{minipage}{0.48\linewidth}
		\begin{center}
			\includegraphics[width=7cm]{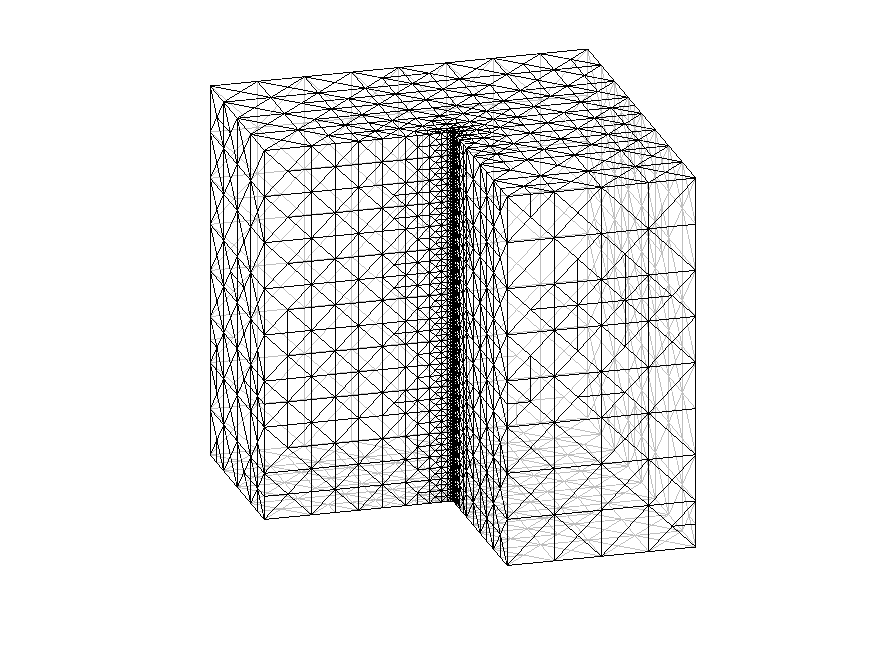}\\ 
			(b) Adaptive mesh.
		\end{center}
	\end{minipage} 
	\begin{minipage}{0.48\linewidth}
		\begin{center}			
			\includegraphics[width=6cm]{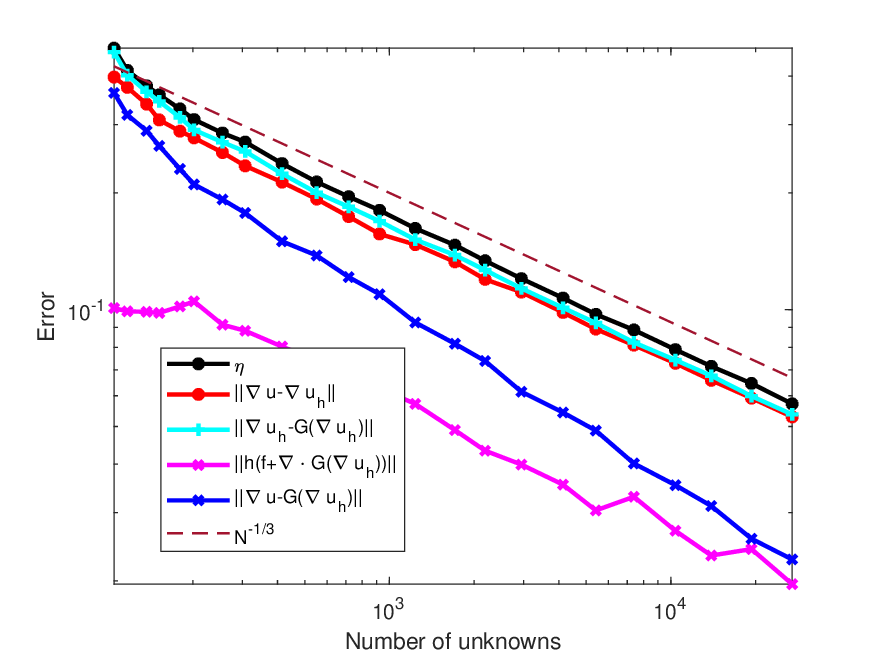}\\ 
			(c) History of error and error estimator.
		\end{center}
	\end{minipage}
	\caption{Numerical results of Example \ref{test3d}.}\label{test_3d}
\end{figure}

Figure \ref{test_3d} lists the numerical solution, adaptive mesh, and errors of Example \ref{test3d}. 
From Figure \ref{test_3d} (c), we can see clearly that gradient error $\|\nabla u-\nabla u_h\|$ and error estimator $\eta$ achieve quasi-optimal convergence rates, and the estimator is asymptotically exact. Similar to the previous three examples, in $\eta^2=\|G(\nabla u_h)-\nabla u_h\|^2+\|h(f+\nabla\cdot G(\nabla u_h))\|^2$, the part $\|h(f+\nabla\cdot G(\nabla u_h))\|$ is much smaller than the part $\|\nabla u_h-G(\nabla u_h)\|$.
\end{example}

In the following two examples, we extend our improved error estimator to the elliptic equation with diffusion coefficient $A$ with a modification, 
\[\eta_K^2=\|A^{1/2}(G(\nabla u_h)-\nabla u_h)\|^2_{0,K}+h_K^2\|A^{-1/2}({f}+\nabla\cdot(A G(\nabla u_h)))\|_{0,K}^2.\]

\begin{example} \label{test4}
Let $\Omega=[-1,1]\times[-1,1]$, we consider the model problem \eqref{Model problem} with continuous coefficient $A =\begin{pmatrix}
    10\cos(y) & 0\\ 0 & 10\cos(y)
\end{pmatrix}$ and the exact solution 
\[u=\frac{1}{(x+0.5)^2+(y-0.5)^2+0.01}-\frac{1}{(x-0.5)^2+(y+0.5)^2+0.01}.\]	

The numerical solution, adaptive mesh, and the errors are reported in Figure \ref{test_4}. The exact solution has large gradients near the two points $(-0.5, 0.5)$ and $(0.5, -0.5)$. 
Figure \ref{test_4} (b) shows that the error estimator guides the adaptive algorithm locally refined around the two peaks of the solution. 
In Figure \ref{test_4} (c), we can see that error estimator $\eta$ is asymptotically exact to the error $\|A^{1/2}(\nabla u-\nabla u_h)\|$. 
Moreover, the first part of $\eta$, i.e.  $\| A^{1/2}(\nabla u_h-G(\nabla u_h))\|$, dominates the error estimator $\eta$, and the second part of $\eta$, i.e. $\|A^{-1/2}h(f+\nabla \cdot(A G(\nabla u_h)))\|$, is superconvergent.
Both the mesh refinement and the errors demonstrate that the error estimator leads to a very effective convergent procedure.  

\begin{figure}[!htbp]
	\begin{minipage}{0.48\linewidth}
		\begin{center}
			\includegraphics[width=7cm]{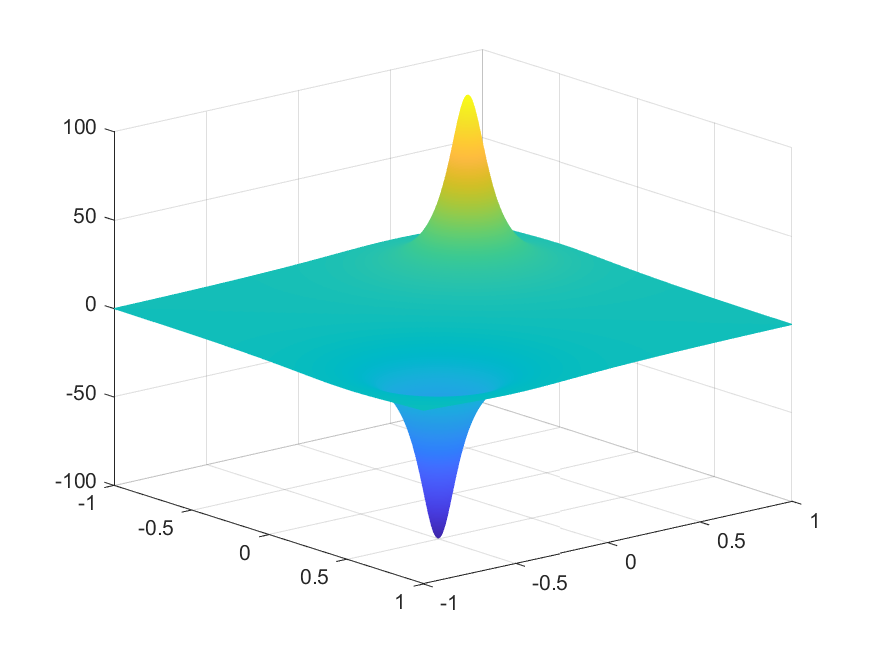}\\
			(a) Numerical solution.
		\end{center}
	\end{minipage}
	\begin{minipage}{0.48\linewidth}
		\begin{center}
			\includegraphics[width=7cm]{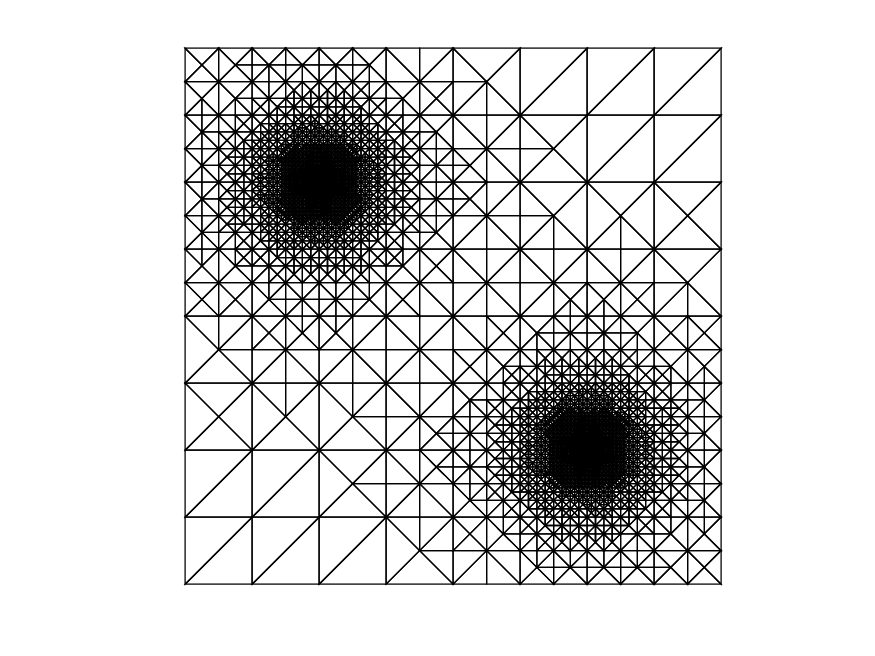}\\ 
			(b) Adaptive mesh.
		\end{center}
	\end{minipage}
	\begin{minipage}{0.48\linewidth}
		\begin{center}			
			\includegraphics[width=6cm]{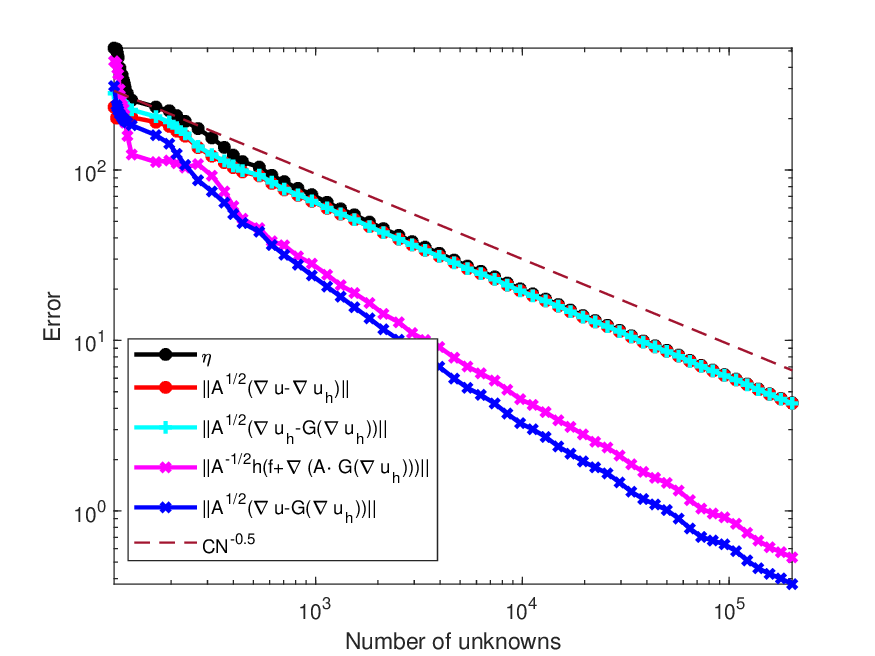}\\
			(c) History of error and error estimator.
		\end{center}
	\end{minipage}
	\caption{Numerical results of Example \ref{test4}.}\label{test_4}
\end{figure}

\end{example}
 
\begin{example} \label{test5} 
In this example, we consider the problem \eqref{Model problem} with discontinuous coefficient $A$ on a square domain $\Omega=(-1,1)\times (-1,1)$, which is decomposed into 4 sub-domain $\{\Omega_i\}_{i = 1}^4$, with $\Omega_1 = (0,1)\times(0,1),\,\Omega_2 = (-1,0)\times(0,1),\,\Omega_3 = (-1,0)\times(-1,0)$ and $\Omega_4 = (0,1)\times(-1,0)$. The exact solution is
\[u(r,\theta)=r^\gamma\mu(\theta),\]
with
	\[
	\mu(\theta)=\begin{cases}\cos((\pi/2-\sigma)\gamma\cdot \cos((\theta-\pi/2+\rho)\gamma)&if~\, 0\leq\theta\leq\pi/2,\\
		\cos(\rho\gamma)\cdot \cos((\theta-\pi+\sigma)\gamma)&if~\, \pi/2\leq\theta\leq\pi,\\
		\cos(\sigma\gamma)\cdot \cos((\theta-\pi-\rho)\gamma)&if~\,\pi\leq\theta<3\pi/2,\\
		\cos((\pi/2-\rho)\gamma)\cdot \cos((\theta-3\pi/2-\sigma)\gamma)&if ~\,3\pi/2\leq \theta<2\pi,
	\end{cases}\]
	where $\gamma, \rho, \sigma$ are numbers satisfying the following nonlinear relations
\[\begin{cases}
		R:=a_1/a_2=-\tan((\pi/2-\sigma)\gamma)\cdot \cot(\rho\gamma),\\
		1/R=-\tan(\rho\gamma)\cdot \cot(\sigma\gamma),\\
		R=-\tan(\sigma\gamma)\cdot\cot((\pi/2-\rho)\gamma),\\
		0<\gamma<2,\\
		\max\{0,\pi\gamma-\pi\}<2\gamma\rho<\min\{\pi\gamma,\pi\},\\
		\max\{0,\pi-\pi\gamma\}<-2\gamma\sigma<\min\{\pi,2\pi-\pi\gamma\}.
	\end{cases}\]
	The piecewise constant coefficient A is taken as
\[A =\begin{cases}
		a_1I&~~xy>0,\\
		a_2I&~~xy<0.
	\end{cases}\]
	In this example, the numbers $\gamma, \rho, \sigma$ are chosen as
\begin{center}
\begin{tabular}{ccccc}
		\hline
		$\gamma$&$\sigma$&$\rho$&$a_1$&$a_2$\\
		\hline
$0.1$&$-14.92256510455152$&$\pi/4$&$161.4476387975881$&$1.0$
  \\	\hline
	\end{tabular}
 \end{center}

The solution has a discontinuous derivative along the interface, and the numerical solution is plotted in Figure \ref{test_50} (a).
Note that the gradient of the solution is discontinuity across the interface, then we apply the gradient recovery method on each sub-domain $\{\Omega_i\}_{i = 1}^4$ separately.
Adaptive refined mesh and the convergence history of errors and the estimators are show in Figure \ref{test_50} (b)-(c). 
We see that the gradient recovery-based error estimator successfully guide the mesh refinement around the origin point, and the decay of $\|A^{1/2}(\nabla u-\nabla u_h)\|$ and $\eta$ is quasi-optimal. 

\begin{figure}[!htbp]
	\begin{minipage}{0.48\linewidth}
		\begin{center}
			\includegraphics[width=7cm]{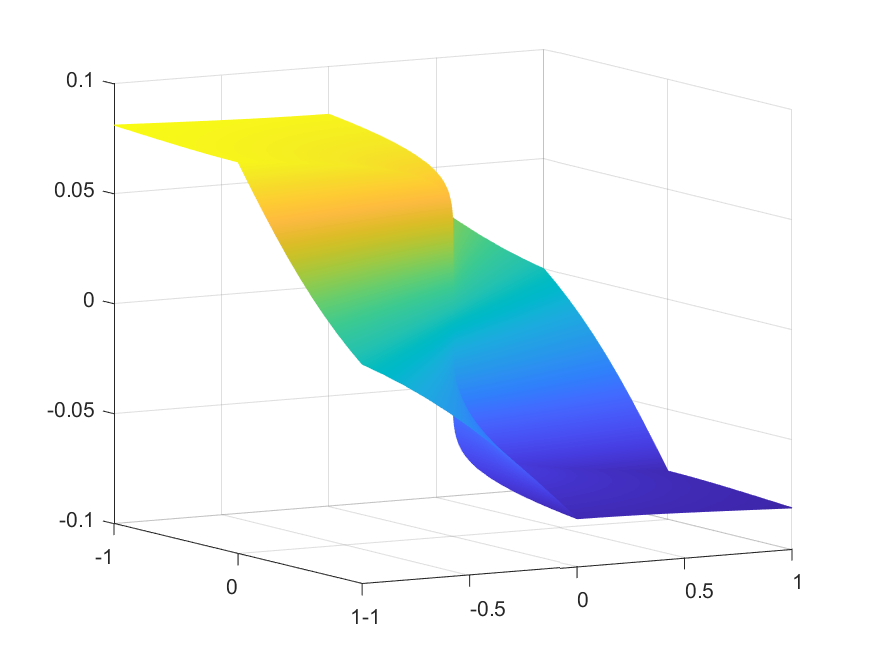}\\ 
			(a) Numerical solution.
		\end{center}
	\end{minipage}
 	\begin{minipage}{0.48\linewidth}
		\begin{center}
			\includegraphics[width=7cm]{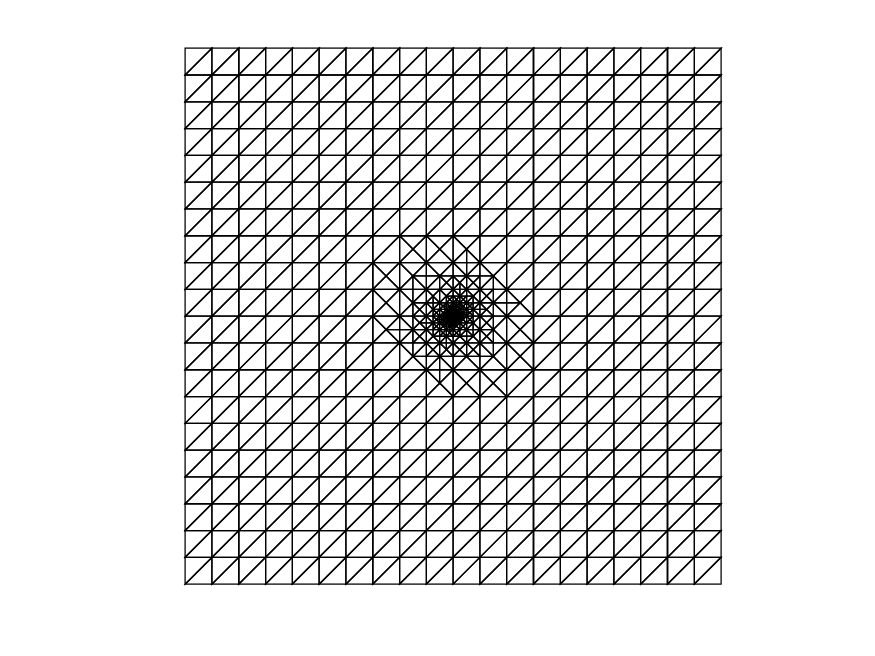}\\ 
			(b) Adaptive mesh.
		\end{center}
	\end{minipage}

	\begin{minipage}{0.48\linewidth}
		\begin{center}			
			\includegraphics[width=6cm]{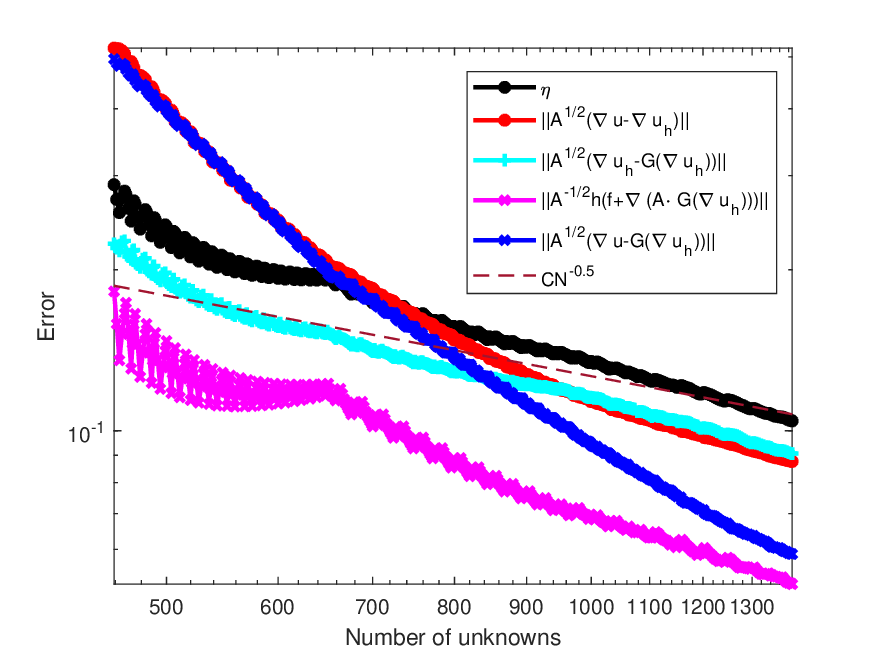}\\ 
			(c) History of error and error estimator.
		\end{center}
	\end{minipage}
 \caption{Numerical results of Example \ref{test5}.}\label{test_50}
 \end{figure}
 
\end{example}

\section*{Acknowledgments}
 Y. Liu was supported by the NSFC Project (12301473), the Doctoral Starting Foundation of Xi’an University of Technology, China
(109-451123001) and Natural Science Special Project of Shaanxi Provincial Department of Education (23JK0564). 
Yi's research was partially supported by NSFC Project (12431014), 
Project of Scientiﬁc Research Fund of the Hunan Provincial Science and Technology Department (2024ZL5017), 
and Program for Science and Technology Innovative Research Team in Higher Educational Institutions of Hunan Province of China.


\begin{thebibliography}{10}
	
	\bibitem{br1978}
	I.~Babu{\v{s}}ka and W.C. Rheinboldt.
	\newblock Error estimates for adaptive finite element computations.
	\newblock {\em SIAM J. Numer. Anal.}, 15:736--754, 1978.
	
	\bibitem{mn}
	P.~Morin, R.~Nochetto, and K.~Siebert.
	\newblock Convergence of adaptive finite element methods.
	\newblock {\em SIAM Rev.}, 44:631--658, 2002.
	
	\bibitem{Bonito2024AdaptiveFE}
	A.~Bonito, C.~Canuto, R.~H. Nochetto, and A.~Veeser.
	\newblock Adaptive finite element methods.
	\newblock {\em Acta. Numer.}, 33:163--485, 2024.
	
	\bibitem{VER1996}
	R.~Verf\"{u}rth.
	\newblock {A Review of A Posteriori Error Estimation and Adaptive Mesh
		Refinement Techniques}.
	\newblock {\em Wiley-Teubner}, 1996.
	
	\bibitem{Ainsworth2000APE}
	M.~Ainsworth and J.~Oden.
	\newblock {A Posteriori Error Estimation in Finite Element Analysis}.
	\newblock In {\em John Wiley \& Sons, New York}, 2000.
	
	\bibitem{ibts2001}
	I.~Babu{\v{s}}ka and T.~Strouboulis.
	\newblock {The Finite Element Method and its Reliability}.
	\newblock {\em The Clarendon Press, New York}, 2001.
	
	\bibitem{marking1996}
	W.~D\"{o}rfler.
	\newblock A convergent adaptive algorithm for {Poisson's} equation.
	\newblock {\em SIAM J. Numer. Anal.}, 33(3):1106--1124, 1996.
	
	\bibitem{Cascn2008QuasiOptimalCR}
	J.~Casc{\'o}n, C.~Kreuzer, R.H. Nochetto, and K.G. Siebert.
	\newblock Quasi-optimal convergence rate for an adaptive finite element method.
	\newblock {\em SIAM J. Numer. Anal.}, 46:2524--2550, 2008.
	
	\bibitem{Kreuzer2011DecayRO}
	C.~Kreuzer and K.~G. Siebert.
	\newblock Decay rates of adaptive finite elements with {D\"o}rfler marking.
	\newblock {\em Numer. Math.}, 117:679--716, 2011.
	
	\bibitem{CCMM2014}
	C.~Carstensen, M.~Page M.~Feischl, and D.~Praetorius.
	\newblock Axioms of adaptivity.
	\newblock {\em Comput. Math. Appl.}, 67(6):1195–1253, 2014.
	
	\bibitem{Durn1991OnTA}
	R.G. Dur{\'a}n, M.~A. Muschietti, and R.~Rodr{\'i}guez.
	\newblock On the asymptotic exactness of error estimators for linear triangular
	finite elements.
	\newblock {\em Numer. Math.}, 59:107--127, 1991.
	
	\bibitem{Triki2024APE}
	Faouzi. Triki, T.~Sayah, and G.~Semaan.
	\newblock A posteriori error estimates for {Darcy-Forchheimer’s} problem
	coupled with the convection-diffusion-reaction equation.
	\newblock {\em Int. J. Numer. Anal. Mod.}, 2024.
	
	\bibitem{br}
	I.~Babu{\v{s}}ka and W.C. Rheinboldt.
	\newblock A posteriori error estimates for the finite element method.
	\newblock {\em Int. J. Numer. Meth. Eng.}, 12:1597--1615, 1978.
	
	\bibitem{cn}
	Z.~Chen and R.H. Nochetto.
	\newblock Residual type a posteriori error estimates for elliptic obstacle
	problems.
	\newblock {\em Numer. Math.}, 84:527--548, 2000.
	
	\bibitem{bv}
	C.~Bernardi and R.~Verf{\"{u}}rth.
	\newblock Adaptive finite element methods for elliptic equations with
	non-smooth coefficients.
	\newblock {\em Numer. math.}, 85:579--608, 2000.
	
	\bibitem{mn1}
	K.~Mekchay and R.~Nochetto.
	\newblock Convergence of adaptive finite element methods for general second
	order linear elliptic {PDEs}.
	\newblock {\em SIAM J. Numer. Anal.}, 43:1803--1827, 2005.
	
	\bibitem{zz}
	O.~Zienkiewicz and J.~Zhu.
	\newblock A simple error estimator and adaptive procedure for practical
	engineering analysis.
	\newblock {\em Inter. J. Numer. Meth. Eng.}, 24:337--357, 1987.
	
	\bibitem{dy}
	L.~Du and N.~Yan.
	\newblock Gradient recovery type a posteriori error estimate for finite element
	approximation on non-uniform meshes.
	\newblock {\em Adv. Comput. Math.}, 14:175--193, 2001.
	
	\bibitem{fa}
	F.~Fierro and A.~Veeser.
	\newblock A posteriori error estimators, gradient recovery by averaging, and
	superconvergence.
	\newblock {\em Numer. Math.}, 103:267--298, 2006.
	
	\bibitem{yz}
	N.~Yan and A.~Zhou.
	\newblock Gradient recovery type a posteriori error estimates for finite
	element approximations on irregular meshes.
	\newblock {\em Comput. Meth. Appl. Mech. Eng.}, 190:4289--4299, 2001.
	
	\bibitem{za}
	Z.~Zhang and A.~Naga.
	\newblock A new finite element gradient recovery method: superconvergence
	property.
	\newblock {\em SIAM J. Sci. Comput.}, 26:1192--1213, 2005.
	
	\bibitem{Deng2022RECOVERYBASEDAP}
	Z.~Deng and Y.~Huang.
	\newblock Recovery-based a posteriori error estimation for elliptic interface
	problems based on partially penalized immersed finite element methods.
	\newblock {\em Int. J. Numer. Anal. Mod.}, 2022.
	
	\bibitem{cz}
	Z.~Cai and S.~Zhang.
	\newblock Recovery-based error estimator for interface problems: conforming
	linear elements.
	\newblock {\em SIAM J. Numer. Anal.}, 47:2132--2156, 2009.
	
	\bibitem{cz1}
	Z.~Cai and S.~Zhang.
	\newblock Recovery-based error estimators for interface problems: mixed and
	nonconforming finite elements.
	\newblock {\em SIAM J. Numer. Anal.}, 48:30--52, 2010.
	
	\bibitem{functional2000}
	N.A.Pierce and M.B. Giles.
	\newblock Adjoint recovery of superconvergent functionals from {PDE}
	approximations.
	\newblock {\em SIAM Rev}, 42(2):247--264, 2000.
	
	\bibitem{hy}
	Y.~Huang, K.~Jiang, and N.~Yi.
	\newblock Some weighted averaging methods for gradient recovery.
	\newblock {\em Adv. Appl. Math. Mech.}, 4:131--155, 2012.
	
	\bibitem{hy1}
	Y.~Huang and N.~Yi.
	\newblock The superconvergent cluster recovery method.
	\newblock {\em J. Sci. Comput.}, 44:301--322, 2010.
	
	\bibitem{ppr2004}
	A.~Naga and Z.~Zhang.
	\newblock A posteriori error estimates based on the polynomial preserving
	recovery.
	\newblock {\em SIAM J. Numer. Anal.}, 42(4):1780--1800, 2004.
	
	\bibitem{Petzoldt2002}
	M.~Petzoldt.
	\newblock A posteriori error estimators for elliptic equations with
	discontinuous coefficients.
	\newblock {\em Adv. Comput. Math.}, 16:47--75, 2002.
	
	\bibitem{Mitchell1989ACO}
	W.~F. Mitchell.
	\newblock A comparison of adaptive refinement techniques for elliptic problems.
	\newblock {\em ACM Trans. Math. Softw.}, 15:326--347, 1989.
	
	\bibitem{cz2012}
	Z.~Cai and S.~Zhang.
	\newblock Robust equilibrated residual error estimator for diffusion problems:
	Conforming elements.
	\newblock {\em SIAM J. Numer. Anal.}, 50:151--170, 01 2012.
	
	\bibitem{Ovall2006}
	J.~Ovall.
	\newblock Two dangers to avoid when using gradient recovery methods for finite
	element error estimation and adaptivity.
	\newblock 2006.
	
	\bibitem{verfurth1999}
	{R. Verf\"{u}rth}.
	\newblock Error estimates for some quasi-interpolation operators.
	\newblock {\em ESAIM: M2AN}, 33(4):695--713, 1999.
	
\end{thebibliography}

\end{document}